\numberwithin{equation}{section}
\theoremstyle{plain}
\newtheorem{thm}{Theorem}[section]
\newtheorem{lemma}[thm]{Lemma}
\newtheorem{prop}[thm]{Proposition}
\theoremstyle{remark}
\newtheorem{rem}[thm]{Remark}
\newcommand{\GQ}{\mathcal{Q}}
\newcommand{\SQ}{\widetilde{\mathcal{Q}}}
\newcommand{\tQ}{\widetilde{Q}}
\newcommand{\Sq}{\tilde{q}}
\newcommand{\dd}{\mathrm{d}}
\DeclareMathOperator{\dis}{dis}
\newcommand{\dGH}{\mathrm{d_{GH}}}
\newcommand{\dTV}{\mathrm{d_{TV}}}
\newcommand{\br}{\mathbf{r}}
\newcommand{\bq}{\mathbf{q}}
\newcommand{\mm}{\mathbf{m}}
\renewcommand{\tt}{\mathbf{t}}
\newcommand{\Z}{\mathbb Z}
\newcommand{\N}{\mathbb N}
\newcommand{\cJ}{\mathcal{I}}
\newcommand{\cR}{\mathcal{R}}
\newcommand{\cS}{\mathcal{S}}
\newcommand{\fS}{\mathfrak{S}}
\newcommand{\cT}{\mathcal{T}}
\newcommand{\cX}{\mathcal{X}}
\newcommand{\cY}{\mathcal{Y}}
\newcommand{\cZ}{\mathcal{Z}}
\newcommand{\fB}{\mathfrak{B}}
\newcommand{\fF}{\mathfrak{F}}
\newcommand{\fL}{\mathfrak{L}}
\newcommand{\fW}{\mathfrak{W}}
\newcommand{\E}{\mathbb E}
\newcommand{\eps}{\varepsilon}
\DeclareMathOperator{\core}{Core}
\DeclareMathOperator{\Rt}{Rt}
\newcommand{\RR}{\mathfrak R}
\renewcommand{\P}{\mathbb P}
\newcommand{\ro}{\cR^{\eps}_{n}}
\newcommand{\rt}{\cR'^{\eps}_{n}}
\newcommand{\cro}{\bar{\cR}^{\eps}_{n}}
\newcommand{\crt}{\bar{\cR}'^{\eps}_{n}}
\newcommand{\de}{\mathrel{\mathop:}\hspace*{-.6pt}=}
\DeclareMathOperator*{\lsup}{\smash{\limsup}}
\DeclareMathAlphabet\mathbfcal{OMS}{cmsy}{b}{n}
\newcommand{\BD}{\mathbf{BD}}
\newcommand{\are}[1]{\lVert #1\rVert}
\newcommand{\arelr}[1]{\left\lVert #1\right\rVert}
\DeclareMathOperator{\suc}{succ}
\DeclareMathOperator{\argmin}{argmin}
\newcommand{\per}[1]{\lvert\partial #1\rvert}
\newcommand{\perlr}[1]{\left\lvert\partial #1\right\rvert}
\newcommand{\ind}{\mathbf 1}
\newcommand{\pin}{p_\mathrm{in}}
\newcommand{\pri}{p_\mathrm{left}}
\newcommand{\ple}{p_\mathrm{right}}
\newcommand{\bge}{{\scalebox{.7}{$\boldsymbol\ge$}}}
\newcommand{\beq}{{\scalebox{.7}{$\boldsymbol=$}}}
\newcommand{\ouM}{\overline{\underline{\mathrm{M}}}}
\begin{document}

\begin{frontmatter}
\title{Scaling limit of random plane quadrangulations with a simple boundary, via restriction}
\runtitle{Scaling limit of random plane quadrangulations with a simple boundary, via restriction}

\begin{aug}
\author[A]{\inits{J.}\fnms{J\'er\'emie}~\snm{Bettinelli}\orcid{	0000-0002-0359-7493}},
\author[B]{\inits{N.}\fnms{Nicolas}~\snm{Curien}},
\author[C]{\inits{L.}\fnms{Luis}~\snm{Fredes}\orcid{0000-0003-1404-6328}}
\and
\author[D]{\inits{A.}\fnms{Avelio}~\snm{Sep\'ulveda}\orcid{0000-0001-9481-8898}}

\address[A]{LIX, cnrs, \'Ecole polytechnique, Institut Polytechnique de Paris, Palaiseau, France.}
\address[B]{Universit\'e Paris-Saclay and Institut universitaire de France, Orsay, France}
\address[C]{Universit\'e de  Bordeaux, cnrs, Bordeaux INP, IMB, Talence, France}
\address[D]{Universidad de Chile, Centro de Modelamiento Matem\'atico (AFB170001), UMI-CNRS 2807, Beauchef 851, Santiago, Chile.}
\end{aug}


\begin{abstract}
We prove that quadrangulations with a simple boundary converge to the Brownian disk. More precisely, we fix a sequence~$(p_n)$ of even positive integers with $p_n\sim 2\alpha \sqrt{2n}$ for some $\alpha\in(0,\infty)$. Then, for the Gromov--Hausdorff topology, a quadrangulation with a simple boundary uniformly sampled among those with~$n$ inner faces and boundary length~$p_n$ weakly converges, in the usual scaling $n^{-1/4}$, toward the Brownian disk of perimeter~$3\alpha$. 

Our method consists in seeing a uniform quadrangulation with a simple boundary as a conditioned version of a model of maps for which the Gromov--Hausdorff scaling limit is known. We then explain how classical techniques of unconditionning can be used in this setting of random maps.
\end{abstract}

\begin{abstract}[language=french]
Nous prouvons que les quadrangulations \`a bord simple convergent vers le disque brownien. Plus pr\'ecis\'ement, nous fixons une suite~$(p_n)$ d'entiers pairs strictement positifs tels que $p_n\sim 2\alpha \sqrt{2n}$ pour un certain $\alpha\in(0,\infty)$. Alors, pour la topologie de Gromov--Hausdorff, une quadrangulation \`a bord simple, choisie uniform\'ement au hasard parmi celles ayant~$n$ faces internes et p\'erim\`etre~$p_n$, converge faiblement, dans l'\'echelle usuelle $n^{-1/4}$, vers le disque brownien de p\'erim\`etre~$3\alpha$.

Notre m\'ethode consiste \`a consid\'erer une quadrangulation \`a bord simple uniforme comme une version conditionn\'ee d'un mod\`ele de cartes pour lequel la limite d'\'echelle au sens de Gromov--Hausdorff est d\'ej\`a connue. Nous expliquons ensuite comment utiliser les techniques classiques de d\'econditionnement dans ce contexte de cartes al\'eatoires. 
\end{abstract}

\begin{keyword}[class=MSC]
\kwd[primary ]{60F99}
\kwd{60D05}
\kwd[; secondary ]{05C80}
\end{keyword}

\begin{keyword}
\kwd{plane maps}
\kwd{Brownian disk}
\kwd{quadrangulation}
\kwd{scaling limit}
\kwd{simple boundary}
\end{keyword}

\end{frontmatter}

\section{Introduction}

In probability theory, proving conditional limit theorems is usually much harder than obtaining the corresponding unconditional versions; for instance, one may think of conditional versions of Donsker's theorem (e.g.~\cite{kaigh76ipr}). In the present work, we describe a method enabling to transfer the convergence of some model of random maps to a similar model with extra constrains (here obtained by imposing simplicity conditions on the boundary). This is inspired from well-known techniques used for random processes or random trees, see e.g.~\cite{legall10exc,kortch13duq,DI77}.

\subsection*{Plane maps} A \emph{plane map} is an embedding of a finite connected graph (possibly with loops and multiple edges) into the two-dimensional sphere, considered up to direct homeomorphisms of the sphere. The faces of the map are the connected components of the complement of the union of the edge set. We will particularly focus on \emph{quadrangulations with a boundary}, which are particular instances of plane maps whose faces are all \emph{quadrangles}, that is, of degree~$4$, with the exception of one face of arbitrary even degree. The latter face will be referred to as the \textit{external face}, whereas all others will be called \textit{inner faces}; the number of inner faces is the \emph{area} of the map. We say that an oriented edge, that is, an edge given with one of its two possible orientations, is \emph{incident} to a face if it lies on boundary of the face, with the face on its right\footnote{In the literature, it is also common to use the convention that the face lies to the left. The present convention will make the encoding of Section~\ref{secbij} easier.}. The oriented edges incident to the external face will constitute the \textit{boundary} of the map and the degree of the external face is called the \emph{length} of the boundary or the \emph{perimeter} of the map. In general, we do not require the boundary to be a simple curve; when it is, we speak of \emph{quadrangulations with a simple boundary}. Unless explicitly stated, we will always consider our maps to be rooted, which means that one of the oriented edges, called the \emph{root} of the map, is distinguished. In the case of quadrangulations with a boundary, the root will always be incident to the external face, that is, lie on the boundary, with the external face to its right. See Figure~\ref{core} for an example of quadrangulations with either a general or a simple boundary. For $n\in\N$ and $p\in 2\N$, we denote by $\GQ_{n,p}$ the set of quadrangulations with a boundary\footnote{Beware that, in the present work, the second index is always even and represents the perimeter of the map. In the literature, it is common to use half the perimeter instead. As the boundary of maps considered here will be broken into pieces of arbitrary parity, we found this convention more appropriate.} having~$n$ inner faces and perimeter~$p$, as well as $\SQ_{n,p}\subseteq \GQ_{n,p}$ the subset of quadrangulations with a \emph{simple} boundary. By convention, we see the map with one edge and two vertices as the only element of $\SQ_{0,2}=\GQ_{0,2}$.	For~$\bq\in\GQ_{n,p}$, we respectively denote its area and perimeter by
\[
\are{\bq}\de n\qquad\qquad\text{ and }\qquad\qquad\per{\bq}\de p\,.
\]

\begin{figure}[ht!]
	\centering\includegraphics[height=51mm]{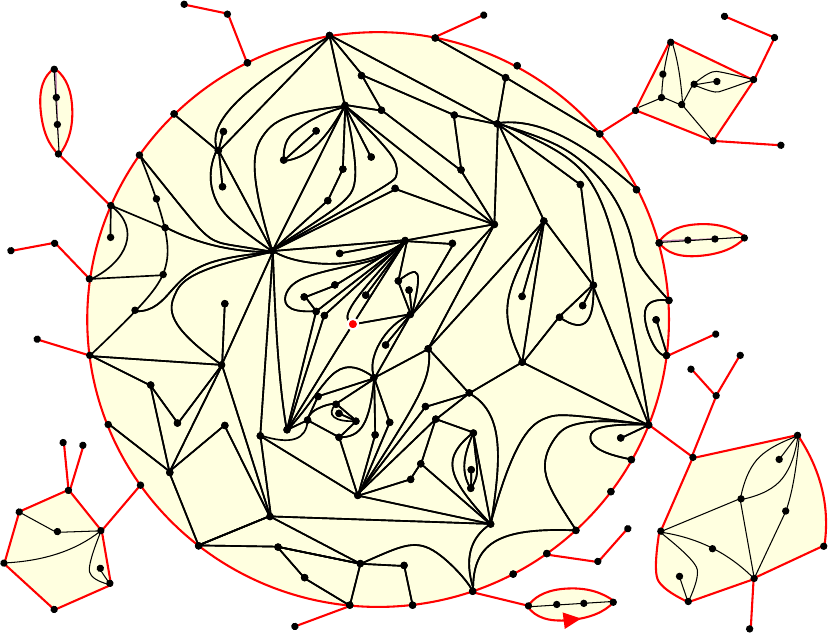}\hspace{2cm}\includegraphics[height=51mm]{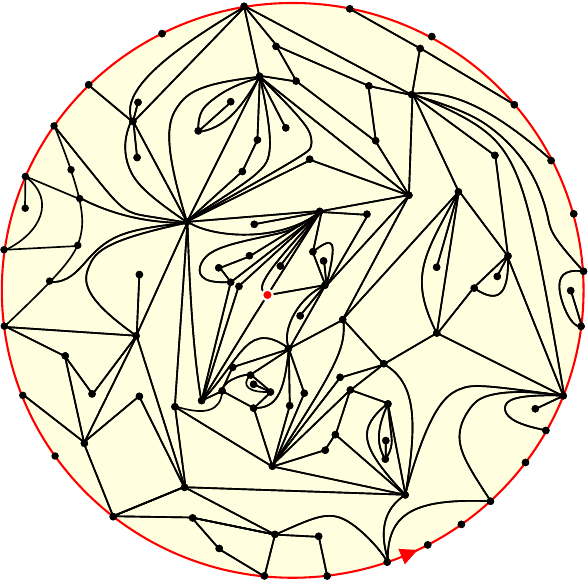}
	\caption{Quadrangulation with a boundary on the left; quadrangulation with a \emph{simple} boundary on the right. The boundary is represented in red. These maps are \emph{pointed} in the sense that a vertex, in red, is distinguished. The map on the right is in fact the so-called \emph{core} of the pointed map on the left, defined in Section~\ref{seccoredec}.}
	\label{core}
\end{figure}

For technical reasons due to bijective encodings, we will often consider pointed maps: we say that a map~$\mm$ is \emph{pointed} if it is given with a distinguished element of its vertex-set $V(\mm)$. We introduce the sets of quadrangulations with a simple boundary and that of pointed quadrangulations with a simple boundary: 
\[
\tQ\de\SQ_{0,2}\cup\bigcup_{n\in\N,\,p\in2\N} \SQ_{n,p}\qquad\text{ and }\qquad
\tQ^\bullet\de\Big\{(\bq,\rho)\,:\,\bq\in\tQ,\, \rho\in V(\bq)\Big\}\,.
\]

\subsection*{Proving convergence toward the Brownian sphere} The \emph{Brownian sphere}~\cite{LG11,miermont11bms} is a random fractal metric space almost surely homeomorphic to the sphere that appears as a universal scaling limit of many models or random plane maps. In his breakthrough work \cite{LG11}, Le Gall gave a robust path to prove the convergence of a family of random maps toward the Brownian sphere; it has since been used in many works~\cite{beltranlg,addarioalbenque2013simple,BeJaMi14,abr13,AHS19simpletrigpolygons,AlbenqueAddarioOdd}. One downside of this method is that it requires to find a bijective encoding ``\`a la Schaeffer'' of the family of plane maps in question by a suitable class of labeled trees. A different approach has been taken in \cite{CLGmodif} where it is shown that ``local modifications'' of distances in uniform triangulations only change the large scale metric by a multiplicative factor (which is unknown in most cases). This has later been extended to the case of Eulerian triangulations~\cite{carrance2019convergence} and quadrangulations~\cite{lehericy2019firstpassage}. Another direct method is to transfer results to classes of maps that are ``contained within'' another class, for instance by taking the core decomposition, by pruning the boundary, etc. This usually yields a family of random maps $M_{N(n)}$, which converges in the scaling limit but for which the ``size'' $N(n)$ (which may be the number of faces, the length of the boundary, etc.) is random and satisfies a weak law of large number $ N(n)/n \to c$ for some $c>0$. Examples of such constructions can be found in~\cite{BFSS01,CM12,ABWcore,GM19cv}. It then remains to deduce from such results the convergence of~$M_{n}$ as $n \to \infty$ by unconditioning methods. We will use in this work such a method: the idea is to consider restrictions of our map model obtained by ``exploring'' all but a tiny proportion of the map. The law of these restrictions are then controlled in total variation distance using a ``local limit theorem'' (here exact counting of maps). The remaining of the argument consists in establishing that those restrictions are close to the whole map.

\subsection*{Setting and notation.} For each $n\in\N$ and $p\in 2\N$, we let~$Q_{n,p}$ be uniformly distributed over the set~$\GQ_{n,p}$ of quadrangulations with~$n$ inner quadrangles and a general boundary of length~$p$, as well as~$\tQ_{n,p}$ be uniformly distributed over the set~$\SQ_{n,p}$ of quadrangulations with~$n$ inner quadrangles and a simple boundary of length~$p$. We also denote by~$Q^{\bullet}_{n,p}$ and~$\tQ_{n,p}^\bullet$ uniform quadrangulations respectively of~$\GQ_{n,p}$ and~$\SQ_{n,p}$ that are pointed uniformly at random on one of their vertices.

When~$\mm$ is a map, we equip its vertex-set~$V(\mm)$ with the graph metric~$\dd_\mm$ defined as the minimal number of edges in a path linking vertices. Furthermore, for a positive number $c >0$, we denote by $c\, \mm$ the (finite) metric space $(V(\mm), c \,\dd_\mm (\cdot,\cdot))$; a map or a pointed map may thus be seen as a metric space.

From now on, we fix $\alpha\in(0,\infty)$ and a sequence~$(p_n)_{n\in\N}$ with
\[
p_n\sim 2\alpha \sqrt{2n}\qquad\text{ as }n\to\infty\,.
\] 

\subsection*{Scaling limit of quadrangulations with a boundary.} 
In the present work, we show the convergence of quadrangulations with a simple boundary toward the Brownian disk. This particular choice of random maps model is motivated by the study of gluing operations on maps \cite{CC16,GM2,GP19,FS19}. The \emph{Brownian disk}~\cite{BeMi15I} is the counterpart to the Brownian sphere with the topology of the disk. It arises as the scaling limit of many models of random plane maps with a boundary (that is, plane maps with only one large face in the scale~$\sqrt n$, where~$n$ is the number of faces). In particular, the following convergence is established~(\cite[Theorem~1]{BeMi15I}):
\begin{equation}\label{cvBdisk}
\left( \frac{9}{8n}\right)^{1/4}Q_{n, p_n} \xrightarrow[n\to\infty]{(d)} \BD_{\alpha},
\end{equation}
in distribution for the Gromov--Hausdorff topology\footnote{See Appendix~\ref{apdGH}.}, where $\BD_{\alpha}$ is the Brownian disk with perimeter~$\alpha$ and unit area. Using the conveniences of \cite{BeMi15I}, as well as the peeling process, Gwynne \& Miller~\cite{GM19cv} later proved that properly rescaled quadrangulations with a \emph{simple} boundary but with random area (under the critical Boltzmann distribution) converge toward a free area version of the Brownian disk called the \emph{free Brownian disk}~\cite[Section~1.5]{BeMi15I}. We prove the following conditional version of this convergence.

\begin{thm}\label{mainthm} It holds that
\[
\left( \frac{9}{8n}\right)^{1/4} \tQ_{n,p_{n}}\xrightarrow[n \to \infty]{(d)} 
\BD_{3 \alpha},
\]
in distribution for the Gromov--Hausdorff topology.
\end{thm}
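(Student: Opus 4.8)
The plan is to realize $\tQ_{n,p_n}$ as a conditioned version of $Q^\bullet_{N,P}$ for suitable random $(N,P)$, via the core decomposition sketched in Section~\ref{seccoredec}: a pointed quadrangulation with a general boundary decomposes into its core (a pointed quadrangulation with a \emph{simple} boundary) and a collection of quadrangulations with a boundary hanging off the simple core, glued along the boundary edges. Under this decomposition, if $Q^\bullet_{n',p'}$ is uniform, then \emph{conditionally on its core having area $n$ and perimeter $p$}, the core is uniformly distributed over $\tQ^\bullet_{n,p}$. Thus, the law of $\tQ_{n,p_n}$ is obtained from that of $Q^\bullet_{n',p'}$ by conditioning on a core of prescribed size, after forgetting the point; and the exact enumeration formulas for $\GQ$ and $\SQ$ (together with the asymptotics of the hull/pendant-submaps generating function) should give us precise control, in total variation, of the size of the core of $Q^\bullet_{n',p'}$ when $n'$ and $p'$ are chosen appropriately. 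Concretely, I would choose $p'$ with $p' \sim 2\alpha\sqrt{2n'}$ scaled so that the typical core perimeter is $\sim p_n$ — the factor-$3$ jump in the perimeter of the limiting Brownian disk in Theorem~\ref{mainthm} is exactly the signature that a uniform general boundary of length $p'$ has a simple core of length $\sim p'/3$, in the same way that the core of a random planar map or the pruned boundary produces a deterministic constant.

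The argument then runs through the ``restriction'' idea advertised in the introduction. Fix a small $\eps>0$. I would explore $Q^\bullet_{n',p'}$ (equivalently, reveal the pendant submaps glued along the core) so as to reveal all but an $\eps$-proportion of the area, recording the partial data as a random ``restriction'' $\cR^\eps_{n'}$; by the exact counting of maps one obtains a \emph{local limit theorem}: the total variation distance between the law of this restriction built from a uniform \emph{general}-boundary quadrangulation and the law of the analogous restriction built from a uniform \emph{simple}-boundary quadrangulation (of the matching conditioned size) tends to $0$. Granting this, and granting from \eqref{cvBdisk} that $(9/8n')^{1/4}Q_{n',p'}\to\BD_\alpha$, one transfers the Gromov--Hausdorff convergence to $\tQ_{n,p_n}$ provided one also shows that the restriction is metrically close to the full map: that the discarded $\eps$-fraction of the map contributes only $o(n^{1/4})$ to diameters, uniformly as $\eps\to0$. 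This last point is a tightness/fluctuation estimate on the sizes of the pendant submaps and their diameters, of exactly the kind established for cores of random maps in \cite{ABWcore,GM19cv}, and should follow from moment bounds on the number and perimeters of the pendant quadrangulations together with standard diameter bounds for quadrangulations with a boundary.

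The main obstacle I expect is the local limit theorem in total variation: one needs sufficiently sharp two-sided asymptotics for $\#\GQ_{n,p}$ and $\#\SQ_{n,p}$ — not just polynomial-times-exponential first-order asymptotics but enough uniformity in $p$ in the regime $p\asymp\sqrt n$ — to show that conditioning $Q^\bullet_{n',p'}$ on the core size is, up to $o(1)$ in total variation, the same as directly sampling the restriction of $\tQ_{n,p_n}$. This is where the ``exact counting of maps'' does the work of a genuine local limit theorem, and matching the fluctuations of the core area and core perimeter (a two-dimensional local CLT, since both $\are{\core}$ and $\per{\core}$ fluctuate) is the delicate quantitative heart of the proof. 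Once that is in place, combining it with \eqref{cvBdisk}, the invariance of the core under conditioning, and the metric-closeness of the restriction yields the convergence $(9/8n)^{1/4}\tQ_{n,p_n}\to\BD_{3\alpha}$ stated in Theorem~\ref{mainthm}.
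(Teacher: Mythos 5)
Your overall architecture is the same as the paper's: use $\core(Q^\bullet_{n,3p_n})$ as a proxy whose conditional law given its (area, perimeter) is uniform over $\tQ^\bullet_{\tilde A_n,\tilde P_n}$, compare restrictions in total variation via exact counting, and control the metric contribution of the unexplored part. However, you misidentify the quantitative heart of the argument. No two-dimensional local CLT for $(\tilde A_n,\tilde P_n)$ is needed, and the paper never matches the fluctuations of the core size. Instead, the restriction is defined as a \emph{hull of a metric ball} around the random point $\rho$, swallowing everything except a piece attached near $t_{1/3}$; this gives the exact product formula of \Cref{lemRnpm}, in which the only enumeration entering is $\Sq_{m,2\ell}$ (simple-boundary counts, explicit by Bouttier--Guitter). \Cref{lem:RND} then shows that $\P\big(\ro(\tQ^\bullet_{n'\!,p'})=\br\big)$ varies by a factor $1\pm\eta$ as $(n',p')$ ranges over a $\zeta$-window around $(n,p_n)$, so the comparison with $\ro(X_n)$ only requires the \emph{weak} law of large numbers of \Cref{cvAnPn} for the core size, not a local one. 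A genuine local limit theorem for the joint core area and perimeter, which is what you propose to prove, would be considerably harder and is exactly what this ratio argument is designed to avoid; moreover, even granted such a local CLT, conditioning on the exact core size does not by itself yield total-variation closeness of the maps --- you would still need the smoothness-in-$(n',p')$ of the restriction law, i.e.\ \Cref{lem:RND}.

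The second, more serious gap is in your treatment of the metric closeness of the restriction, \Cref{Psmall}. Your justification via ``moment bounds on the number and perimeters of the pendant quadrangulations together with standard diameter bounds'' only applies to the reference model $Y_n$, where the bijective encoding is available and the labels read off distances to $\rho$. For the model under study $X_n=\tQ^\bullet_{n,p_n}$ there are no pendant submaps: the discarded piece $\cro(X_n)$ is a genuine chunk of the simple-boundary map near $t_{1/3}$, and no tractable Schaeffer-type encoding of the simple-boundary model exists (this is the raison d'\^etre of the whole paper), so ``standard diameter bounds'' are not available for it. The paper resolves this with the resampling argument of Section~\ref{sec:resampling}: a second restriction $\rt$ taken from the opposite side of the boundary is introduced so that the bound $\ouM$ on $\dGH(X_n,\ro(X_n))$ is a function of $\rt(X_n)$ alone; since $\rt(X_n)=\rt(Y_n)$ with high probability under the maximal coupling, the bound transfers from the reference model. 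This is also the reason the target vertex sits at a third of the boundary (two well-overlapping restrictions are needed). Your proposal is missing this step entirely, and without it the transfer of \eqref{cvBdisk} (or rather of the strengthened joint convergence \eqref{cvCore}, which you also need in order to pass from $Q_{n',p'}$ to its core) to $\tQ_{n,p_n}$ does not go through.
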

One might be surprised to obtain the same scaling limit (up to a constant) as for maps with a general boundary~\eqref{cvBdisk} but, in fact, it was known that the boundary is ``simple at the limit,'' in the sense that the Brownian disk is homeomorphic to a disk~\cite{BetQ}. In this regard, it was expected to obtain the same limit, only with a different boundary length. This boundary factor will appear clearly in a moment.

\begin{rem}
In fact, the convergence of \Cref{mainthm} can be strengthen to the more elaborate Gromov--Hausdorff--Prohorov--Uniform topology \cite[Section~1.2.3]{GM19cv}, which furthermore keeps track of the area and perimeter measures on the map. We chose to use the present simpler framework as we believe the latter would make the paper harder to read and longer, and lead us farther away from the method we chose to present here.
\end{rem}

The remainder of the paper is organized as follows: in the next section, we prove the above theorem assuming technical propositions. As we said above, the idea is to use a proxy for $\tQ_{n,p_{n}}$ for which we know the convergence to the Brownian disk, and then to establish ``local absolute continuity relations.'' In our case, the proxy will be the so-called \emph{core} of a (general) random quadrangulation, and the local absolute continuity relations will be obtained by considering appropriate restrictions of those maps. The proofs of the technical propositions are then derived in Sections~\ref{seccomp} and~\ref{secbij} using exact counting and the usual bijective construction for the proxy model.

\section{Method of proof}

In this section, we present the main lines of the proof of \Cref{mainthm}, deferring the technical estimates to the next sections. This choice of presentation is motivated by the fact that the overall scheme is somehow disconnected from the technical estimates and might be adapted to other similar situations, at the price of appropriate estimates.

\subsection{Core decomposition and proxy map}\label{seccoredec}

Fix a pointed quadrangulation~$\bq^{\bullet}= (\bq,\rho)$ with a general boundary. Its \emph{core}, denoted by $\core(\bq^{\bullet})$, is the pointed quadrangulation with a simple boundary defined as follows; see Figure~\ref{core}. By ``cutting'' the pinch vertices along its boundary, we may decompose~$\bq$ into smaller quadrangulations with a simple boundary, each rooted at the first oriented edge of its boundary in contour order starting from the root of~$\bq$. If there is a unique largest such component (in terms of number of inner faces) and~$\rho$ belongs to this component, then the core is the latter component. Otherwise, we define $\core(\bq^{\bullet})$ as an abstract cemetery point~$\wp$ for which we set $\are{\wp}=\per{\wp}\de 0$. Let us first remind a few well-known properties of the core of a random quadrangulation with a general boundary; see~\cite[Section~4]{CM12} for more information.  

\begin{prop}[{\cite[Proposition 2.6 \& Lemma 2.7]{GM19cv}}]\label{cvAnPn}
We have $\P\big(\core(Q^{\bullet}_{n,3p_{n}}) \neq \wp\big) \to 1$ as $n \to \infty$ and, furthermore,
\[
\frac{\arelr{\core\big(Q^{\bullet}_{n,3p_{n}}\big)}}{n}  \xrightarrow[n\to\infty]{(\P)}1\qquad\text{ and }\qquad
	\frac{\perlr{\core\big(Q^{\bullet}_{n,3p_{n}}\big)}}{p_n} \xrightarrow[n\to\infty]{(\P)} 1\,.
\]
\end{prop}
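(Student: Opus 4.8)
The plan is to establish the two convergences in probability by combining a first-moment computation on the sizes of the boundary components with the known asymptotic enumeration of quadrangulations with a (general or simple) boundary. First I would recall the classical counting estimates: by Tutte-type formulas (see e.g.\ \cite{BeMi15I,CM12}) one has, for fixed rational $\beta>0$ and $p\sim\beta\sqrt{2n}$, an estimate of the form $\#\GQ_{n,p}=n^{-5/2}\,C^{n}\,f(\beta)\,(1+o(1))$ and similarly $\#\SQ_{n,p}=n^{-5/2}\,C^{n}\,\tilde f(\beta)\,(1+o(1))$ for explicit exponential rates and explicit polynomial profiles $f,\tilde f$, where crucially the exponential growth rate $C$ is the same for both classes. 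The slicing at the pinch points of $\bq$ decomposes a pointed quadrangulation with general boundary of perimeter $3p_n$ into an ordered collection of quadrangulations with a \emph{simple} boundary whose perimeters sum to $3p_n$; conditioning on the perimeter partition, the components are independent uniform simple-boundary quadrangulations of the prescribed perimeters, with areas summing to $n$, and the pointed vertex is placed uniformly among all $n+1$-ish vertices, hence lands in a given component with probability proportional to its number of vertices.

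Next I would make the second-moment/union-bound step precise. To prove $\per{\core}/p_n\to 1$, it suffices to show that with probability tending to $1$ there is a component carrying a macroscopic fraction (indeed all but $o(p_n)$) of the total perimeter, and to rule out two large components. Here I would use the fact, standard for this boundary-gluing structure, that the perimeter partition of $Q^\bullet_{n,3p_n}$ behaves like the jumps of a stable-$3/2$ (or the relevant one-sided stable) subordinator bridge/excursion: a single component of perimeter $\sim 3p_n$ dominates while all the others have perimeters of order $o(p_n)$ in aggregate only if... — more carefully, the right statement is that the largest perimeter is $p_n(1-o(1))$ and the pointed vertex lies in it whp because its area is also $n(1-o(1))$. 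This is exactly \cite[Proposition 2.6 \& Lemma 2.7]{GM19cv}, so I would simply cite it; if one wanted a self-contained argument, the route is: (i) show $\E[\#\{\text{components of perimeter}\ge \eps p_n, \text{area}\le (1-\eps)n\}]\to 0$ by summing the enumeration estimates over the relevant range of $(k,m)$ and using $\sum n^{-5/2}C^{m}C^{n-m}/C^{n}\to$ a convergent integral that can be made small, and (ii) a separate easy bound that two disjoint components cannot both have perimeter $\ge\eps p_n$ with non-vanishing probability, again by first moment. Once a dominant component is identified, its area is forced to be $n-o(n)$ since the other components have total area bounded by a quantity which, having small perimeter, also has small area whp (by a crude deterministic bound relating area and perimeter, or again by first moment). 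The pointed vertex lands in it whp, so $\core(Q^\bullet_{n,3p_n})\neq\wp$ whp and the two ratios converge to $1$.

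The main obstacle I expect is controlling the \emph{total} area carried by the non-dominant components simultaneously, rather than one at a time: a naive union bound over all components is too lossy because there can be many small components. The clean way around this is to note that once the perimeter partition is fixed, the vector of areas of the components is a uniformly chosen composition of $n$ into parts weighted by the counts $\#\SQ_{m_i,p_i}$, and to exploit that a component of perimeter $p_i=o(\sqrt n)$ has area highly concentrated below $\eps n$ for any fixed $\eps$ (the area of a Boltzmann simple-boundary quadrangulation with small perimeter has exponential moments on the $n$ scale), so the contribution of all small-perimeter components to the area is $o(n)$ in probability by a single Markov inequality applied to the sum. Since all of this is packaged in the cited \cite[Proposition 2.6 \& Lemma 2.7]{GM19cv}, for the purposes of this paper the proof is simply: invoke that reference, noting that our $Q^\bullet_{n,3p_n}$ with $3p_n\sim 2(3\alpha)\sqrt{2n}$ fits their hypotheses with the perimeter parameter $3\alpha\in(0,\infty)$, and that their core construction coincides with ours.
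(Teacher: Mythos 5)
Your proposal is correct and ultimately takes the same route as the paper, which offers no independent proof of this proposition and simply imports it from \cite{GM19cv} (Proposition~2.6 and Lemma~2.7), exactly as you conclude; the key point you correctly identify is that $3p_n\sim 2(3\alpha)\sqrt{2n}$ places the model in the hypotheses of that reference with perimeter parameter $3\alpha$. Your additional first-moment/enumeration sketch is a reasonable outline of how one could reprove it, but it is not needed here and is not what the paper does.
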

Furthermore, conditionally given the area $\tilde A_{n} \de \are{\core(Q^{\bullet}_{n,3p_{n}})}$ and perimeter $\tilde P_{n} \de \per{\core(Q^{\bullet}_{n,3p_{n}}) }$, provided that $\tilde A_{n} > n/2$ to avoid possible ties,
\[\core\big(Q^{\bullet}_{n,3p_{n}}\big) \quad  \text{ is uniformly distributed over }  \quad \tQ^{\bullet}_{\tilde A_{n},\tilde P_{n}}\,.\]
In particular, the core of $Q_{n,3p_{n}}^{\bullet}$ contains most of the map and indeed~\eqref{cvBdisk} can be strengthened into 
\begin{equation}\label{cvCore}
\left(\left(\frac{9}{8n}\right)^{1/4}Q_{n, 3p_n}^{\bullet},\left( \frac{9}{8n}\right)^{1/4}\core\big(Q_{n, 3p_n}^{\bullet}\big)\right) \xrightarrow[n\to\infty]{(d)} \Big(\BD_{3\alpha},\BD_{3\alpha}\Big)\,.
\end{equation}
The above joint convergence is obtained in~\cite[Theorem~1.3]{GM19cv}, together with the addition of a natural parameterization of the boundary. Combining the above remarks, we might seem close to our goal since $\SQ_{n, p_{n}}^{\bullet}\approx\SQ_{\tilde A_n,\tilde P_n}$, which has the same distribution as $\core\big(Q_{n, 3p_n}^{\bullet}\big)$; in particular this explains the boundary factor~$3$ in \Cref{mainthm}. It remains to lift the previous convergence to a conditional convergence when the area and perimeter are fixed. To do this, we will prove that the distributions of ``large parts'' of $\core\big(Q_{n, 3p_n}^{\bullet}\big)$ and of $\tQ_{n,p_{n}}^{\bullet}$ may be rendered arbitrarily close in total variation distance. These large parts will be defined via what we call \emph{restrictions}.

\subsection{Restrictions}\label{secrestr}

For each $\eps >0$ and $n \ge 1$, we will define \emph{restrictions} of $\tQ_{n,p_{n}}^{\bullet}$ and of $\core\big( Q_{n, 3p_{n}}^{\bullet} \big)$ obtained by exploring the maps in question  up to an $\eps$-small part. These unexplored parts will have a random number of inner faces and a random perimeter, and we will see in the next section, using \emph{exact counting} results, that the restrictions in both models are close in total variation distance.

Given a pointed map $(\bq,\rho)$ and $\ell\in \N$, we denote by $B_\ell(\bq,\rho)$ its \emph{ball} of radius~$\ell$, that is, the map obtained from~$\bq$ by keeping only the faces that are incident to at least one vertex lying at graph distance~$\ell-1$ or less from the marked vertex~$\rho$. 

The notion of restriction we will use roughly consists in taking the (hull of the) smallest ball that hits the boundary of the map within distance~$\eps\,p_{n}$ from the vertex of the boundary located roughly at a third of the boundary length from the root. The choice of taking a third comes from the need to have two ``well overlapping'' restrictions to apply a resampling argument in Section~\ref{sec:resampling}. In some cases, the construction will not work properly and the definition of the restriction in such a case will not matter too much since these cases should happen with negligible probability in the end.

We fix $n\in \N$ and $\eps >0$, and we define the restriction $\ro$ and its ``complement'' $\cro$ as follows; see Figure~\ref{RBall}. Let $(\bq,\rho)$ be a pointed quadrangulation with a simple boundary and denote by~$p$ its perimeter.\footnote{Note that the area of~$\bq$ is not specified; in practice, this construction will be applied when the area is roughly~$n$.} We assume that $p \ge p_{n}/2$ and number the vertices of the boundary of~$\bq$ from~$0$ to~$p-1$ starting from the tail of the root and following the orientation given by the root. We furthermore assume that $\eps<1/3$ and define~$I$ as the set of vertices of the boundary of~$\bq$ that are numbered from~$\big\lfloor (\frac{1}3-\eps)\, p_{n}\big\rfloor$ to~$\lfloor p_{n}/3\rfloor$, the latter vertex being denoted by $t_{1/3}$ and thought of as ``the target vertex located at a third of the way around the boundary.''

\begin{figure}[ht!]
	\centering\includegraphics[scale=0.55]{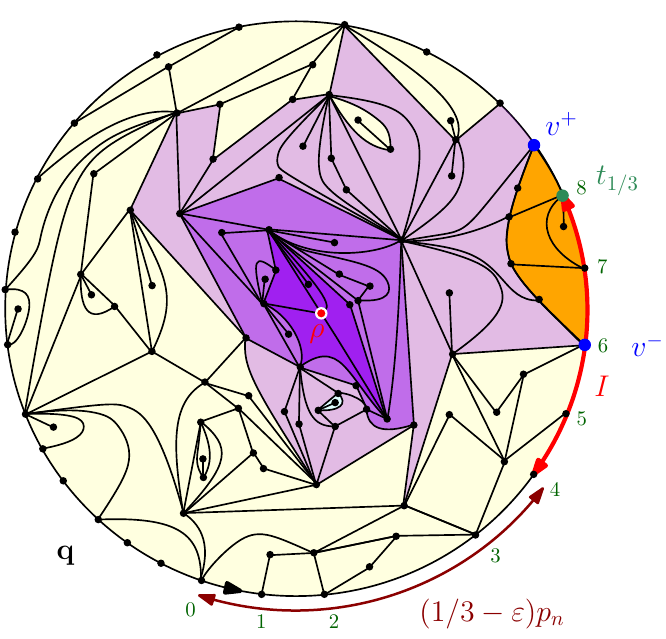}\hfill\includegraphics[scale=0.55]{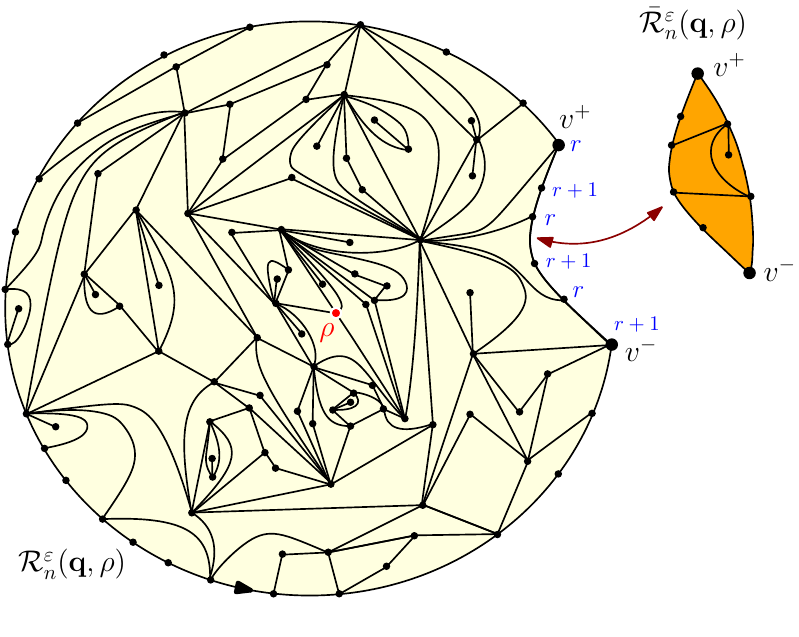}
	\caption[]{Definition of the restriction. We consider the smallest ball that hits the boundary of the map at boundary length between $(\frac{1}3-\eps)\, p_{n}$ and $p_{n}/3$ from the root. On this example, $p=26$, $\big\lfloor (\frac{1}3-\eps)\, p_{n}\big\rfloor=4$, $\lfloor p_{n}/3\rfloor=8$ and $r=3$. The balls of radius~$1$ to~$3$ are depicted with lighter and lighter shades of purple. The restriction $\ro(\bq, \rho)$ is the map consisting of this ball with the addition of the (light yellow) components that do not contain~$t_{1/3}$; the so-called \emph{complement}\protect\footnotemark{} $\cro{(\bq,\rho)}$ is the (orange) component that contains~$t_{1/3}$.}
	\label{RBall}
\end{figure}
\footnotetext{Beware that it is a complement in term of faces, not in term of edges and vertices because of the boundaries.}

We let~$r$ be the smallest integer such that the ball $B_{r}(\bq,\rho)$ intersects~$I$ and denote by~$v^-$ the last vertex of $I\cap B_{r}(\bq,\rho)$, that is, the vertex of this set whose number is the largest in the above numbering of the boundary vertices. We also assume that $B_{r}(\bq,\rho)$ hits the ``other side'' of the boundary between $t_{1/3}$ and the root (both excluded), and denote by~$v^+$ the vertex of $B_{r}(\bq,\rho)$ on the boundary of~$\bq$ with smallest number above $\lfloor p_{n}/3\rfloor + 1$. Notice that, depending on how the ball $B_{r}(\bq,\rho)$ ``hits'' the boundary of~$\bq$, the vertex~$v_{-}$ may be at distance~$r$ or $r+1$ from~$\rho$ and the same goes for~$v_{+}$. 

When the above conditions are satisfied, we set $\ro(\bq,\rho)$ to be the so-called hull of $B_{r}(\bq,\rho)$ with respect to $t_{1/3}$, roughly obtained by filling all the ``holes'' of~$\bq$ except the one containing~$t_{1/3}$. More precisely, it is defined as follows.
\begin{itemize}
	\item If all the faces incident to the part of the boundary of~$\bq$ from~$v^-$ to~$v^+$ belong to~$B_{r}(\bq,\rho)$, then we set $\ro(\bq,\rho)\de(\bq,\rho)$ and $\cro(\bq,\rho)$ as the map with one edge and two vertices.
	\item Otherwise, the inner faces of~$\bq$ that do not belong to~$B_{r}(\bq,\rho)$ are gathered into subsets of adjacent\footnote{Two faces are \emph{adjacent} if they are incident to the same edge. Note that two faces ``only touching by a vertex'' are not adjacent.} faces and only one of these subsets contains faces incident to the part of the boundary of~$\bq$ from~$v^-$ to~$v^+$; we denote this subset by~$\mathfrak{C}$. We define $\ro(\bq,\rho)$ as the map obtained from~$\bq$ by suppressing the faces of~$\mathfrak C$, as well as all the edges and vertices that are only incident to faces of~$\mathfrak C$. We also let $\cro(\bq,\rho)$ be the map obtained from~$\bq$ by keeping the faces of~$\mathfrak C$, as well as all the edges and vertices that are incident to those faces.
\end{itemize}

The map $\ro(\bq,\rho)$ is a quadrangulation with a simple boundary that contains the root edge, the pointed vertex~$\rho$ and with two additional distinguished points~$v^-$ and~$v^+$ on its boundary. Observe that the part of its boundary between~$v_{-}$ and~$v_{+}$ is made of vertices whose distances to the vertex~$\rho$ alternate between~$r$ and $r+1$. The map $\cro(\bq,\rho)$ is a nonrooted quadrangulation with a simple boundary with two distinguished points~$v^-$ and~$v^+$ on its boundary. In the case when the above construction cannot be performed, $\ro(\bq,\rho)$ and $\cro(\bq,\rho)$ are set to the abstract cemetery point~$\wp$.

\begin{rem} At this point, the reader might wonder why we do not use the root as basepoint for balls instead of a randomly chosen vertex~$\rho$. This is only to ease the proof of the forthcoming technical propositions because the bijective encoding of maps are easier to deal with when measuring distances from a random chosen vertex rather than from the root edge; see~\cite{LeGallBoundarypoint}. 
\end{rem}

Observe that $\ro(\bq,\rho)$ is ``decreasing'' with~$\eps$ in the sense that, for $0<\eta<\eps$, the map $\ro(\bq,\rho)$ is ``contained'' in~${\cR^{\eta}_{n}}(\bq,\rho)$. We leave this notion of submap at an intuitive level as we will not really need it in this work. We will only use the fact that, 
\begin{equation}\label{decreas}
\text{for $0<\eta<\eps$,}\qquad {\cR^{\eta}_{n}}(\bq,\rho)={\cR^{\eta}_{n}}(\bq',\rho')\quad\implies\quad \ro(\bq,\rho)=\ro(\bq',\rho')\,.
\end{equation}

Another important feature of this construction is that~$\ro(\bq,\rho)$ and~$\cro(\bq,\rho)$ are independent in the sense that any map~$\bq'$ obtained by completing the map~$\br=\ro(\bq,\rho)$ on the part of its boundary between~$v^-$ and~$v^+$ satisfies $\ro(\bq',\rho)=\br$. This is the reason why we defined the set~$I$ from ``within'' $\ro(\bq,\rho)$. We will come back to this in Section~\ref{seclawrest}.

\subsection{Proof of Theorem~\ref{mainthm} provided two technical estimates}

We now present the main lines of the proof of \Cref{mainthm}. Let us set
\begin{equation} \label{eq:notationref}
\underbrace{X_n\de \tQ_{n,p_{n}}^{\bullet}}_{\textit{\color{blue!80!black}model under study}}\,,\qquad \underbrace{Y_n\de \core\big(Q_{n, 3p_n}^{\bullet}\big)}_{ \textit{\color{blue!80!black}reference model}}\,,\qquad\text{ and } a_n\de\left(\frac{9}{8n}\right)^{1/4} \,.
\end{equation}
The classical bijective encodings often lack flexibility: for instance, tracking through the usual Schaeffer-like bijection\footnote{See Section~\ref{secbij}.} the condition that the boundary is simple is very intricate. In this paper, these bijective encodings will only be used in order to obtain (rough) estimates \emph{on the reference model}. For the model under study, our method only requires counting results.

First, the convergence of the second coordinate of~\eqref{cvCore} ensures that
\begin{equation}\label{cvyn}
a_nY_n \xrightarrow[n \to \infty]{(d)} \BD_{3 \alpha}\,,
\end{equation}
in distribution for the Gromov--Hausdorff topology. Our goal is to obtain a similar statement with~$X_n$ in place of~$Y_n$. This will follow from the facts that the distributions of~$\ro(X_n)$ and of~$\ro(Y_n)$ are close and the leftover parts~$a_n\cro(X_n)$ and~$a_n\cro(Y_n)$ are not too large (when~$\eps$ gets small). These conditions are gathered into the following propositions, whose proofs are postponed to the subsequent sections. In the following, we write $\dTV(A,B)$ for the total variation distance between the distributions of two random variables~$A$ and~$B$. The following proposition will be proved in Section~\ref{seccomp}.

\begin{prop}[Restrictions are close]\label{Pclose}
For all $\eps>0$,
\[\lim_{n \to \infty} \dTV\big(\ro(X_n),\ro(Y_n)  \big)=0\,.\]
\end{prop}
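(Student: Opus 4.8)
The strategy is to compute, for each fixed $\eps>0$, the law of the restriction $\ro$ in both models and show these laws converge to one another in total variation. By the independence property emphasized at the end of Section~\ref{secrestr} (completing a valid restricted map along its free boundary segment between $v^-$ and $v^+$ does not change the restriction), the probability that $\ro(X_n)=\br$, for an admissible ``restricted map'' $\br$ with a boundary segment of length $k$ between $v^-$ and $v^+$, area $\are{\br}$, and with the two marked vertices $v^-,v^+$ at the prescribed alternating distances, is proportional to the number of ways to complete $\br$ into an element of $\tQ^{\bullet}_{n,p_n}$. Such a completion is exactly a quadrangulation with a simple boundary of length $k$ and area $n-\are{\br}$, glued along the segment (and carrying no extra root or marked point, since those are all in $\br$). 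Hence
\[
\P\big(\ro(X_n)=\br\big)\;=\;\frac{\#\SQ_{\,n-\are{\br},\,k}}{\#\tQ^{\bullet}_{n,p_n}}\,,
\]
and similarly $\P\big(\ro(Y_n)=\br\big)$ equals the number of ways to complete $\br$ into a pointed quadrangulation with a \emph{general} boundary of perimeter $3p_n$ and $n$ inner faces \emph{whose core equals $\br$}, divided by a normalizing constant. The key point is that, given that the core is $\br$ (an element of $\tQ^\bullet$ with the right marked data), the remaining structure is again obtained by gluing onto the free boundary segment of $\br$ of length $k$, but now with the leftover allowed to be a \emph{general}-boundary quadrangulation, which itself decomposes (via the pinch-vertex decomposition) into a general-boundary piece together with the condition that it contributes fewer inner faces than the core. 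Up to lower-order corrections coming from the events ``$\core\neq\wp$,'' ``$\tilde A_n>n/2$,'' and the conditioning constraints, both laws are, on the common state space of admissible $\br$, of the form $\br\mapsto f(n-\are{\br},k)/Z$ for an explicit counting function $f$, and the ratio $\P(\ro(X_n)=\br)/\P(\ro(Y_n)=\br)$ is controlled by the ratio of the relevant counting functions evaluated at arguments $(m,k)$ with $m=n-\are{\br}$ small compared to $n$ (this is where $\eps$ small, forcing $\cro$ small, enters) and $k$ of order $\eps p_n\sim\eps\sqrt n$.

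The next step is the \emph{exact counting / local-limit} input. One needs asymptotics, as $n\to\infty$ with $m=o(n)$ and $k\asymp \sqrt n$, for the number of quadrangulations with a simple boundary $\#\SQ_{m,k}$, for the number with a general boundary $\#\GQ_{m,k}$, and for the number of (pointed) general-boundary quadrangulations of area $n$, perimeter $3p_n$, with a prescribed core; all of these are classical and will be recalled/derived in Section~\ref{seccomp}. The universal feature is that $\#\SQ_{m,k}$ (and $\#\GQ_{m,k}$, and the core-conditioned count) behaves, for the relevant ranges, like $C_k\,\kappa^{m}\,m^{-5/2}(1+o(1))$ with the \emph{same} exponential rate $\kappa$ and the \emph{same} polynomial exponent $-5/2$, the model-dependence being confined to the perimeter-dependent prefactor $C_k$ and to subexponential corrections. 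Since $n-\are{\br}$ appears identically in the numerator of both model probabilities, and the perimeter parameter $k$ is the \emph{same} admissible value of $\br$ in both, the $k$-dependent prefactors and the $m$-dependent factors match to leading order; what remains is a ratio tending to $1$ uniformly over the admissible $\br$ with $\are{\br}\ge n-\delta n$ for suitable $\delta=\delta(\eps)\to 0$. Summing $|\P(\ro(X_n)=\br)-\P(\ro(Y_n)=\br)|$ over admissible $\br$, together with the fact (to be established) that both models put mass $1-o(1)$ on the admissible set — i.e.\ the restriction construction ``succeeds,'' the area of $\br$ is $n-o(n)$, its free-boundary length is in the prescribed window, etc.\ — yields $\dTV\big(\ro(X_n),\ro(Y_n)\big)\to 0$.

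Concretely I would organize Section~\ref{seccomp} into: (i) a lemma identifying $\P(\ro(X_n)=\cdot)$ and $\P(\ro(Y_n)=\cdot)$ as explicit ratios of counting numbers, using the independence/self-completion property and Proposition~\ref{cvAnPn} to handle the core conditioning and the $\tilde A_n>n/2$ caveat; (ii) a counting lemma giving the uniform asymptotics of $\#\SQ_{m,k}$, $\#\GQ_{m,k}$ and the core-conditioned count in the regime $m=o(n)$, $k\asymp\sqrt n$, with explicit control of the ratio; (iii) a ``the construction succeeds'' lemma showing that, in both models, with probability $1-o(1)$ the restriction is not $\wp$, has area $n(1-o(1))$ and admissible free-boundary length — here I would use Proposition~\ref{cvAnPn} and the convergence \eqref{cvCore} together with the fact that the Brownian disk is a.s.\ a topological disk (so the smallest ball hitting the boundary in the prescribed window behaves well) to bound $\#$ inner faces and perimeter of $\cro$; and (iv) the final summation.

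\textbf{Main obstacle.} The delicate part is not the leading-order counting asymptotics — those are standard — but obtaining them \emph{uniformly} in the relevant two-parameter regime $m=o(n)$, $k\asymp\sqrt n$ with errors that survive summation over the (polynomially or exponentially many) admissible restricted maps $\br$, and, hand in hand with this, precisely pinning down the law of $\ro(Y_n)$: one must correctly account for the core decomposition (the leftover of $Y_n$ is a general-boundary quadrangulation that must contribute \emph{strictly fewer} inner faces than the core, on pain of the core being $\wp$), so that the ``reference'' count is genuinely $\#\{\text{general-boundary completions whose core is }\br\}$ rather than a naive $\#\GQ$, and to check that this subtlety washes out in the ratio. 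Managing these uniform error terms — rather than the pointwise ratio being $1$ — is where the real work lies.
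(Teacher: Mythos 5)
Your high-level architecture (exact law of the restriction via a gluing bijection, a local-limit comparison of counting functions, summation over an admissible set carrying asymptotically full mass) is the same as the paper's, and your formula for $\P(\ro(X_n)=\br)$ is essentially the paper's Lemma~\ref{lemRnpm}, except that the completion glued between $v^-$ and $v^+$ has perimeter $p_n-\per{\br}+2\pin$, not just the length of the inner segment. The genuine gap is in how you treat $\ro(Y_n)$. You propose to count general-boundary completions of $\br$ with prescribed core and then to compare $\#\SQ$-, $\#\GQ$- and core-conditioned asymptotics, asserting they all take the form $C_k\,\kappa^m m^{-5/2}$ with the perimeter dependence confined to a prefactor. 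The paper never performs (and does not need) any such cross-model comparison: conditionally on $(\tilde A_n,\tilde P_n)=(\are{Y_n},\per{Y_n})$, the core is uniform on the pointed simple-boundary quadrangulations with those parameters, so $\P(\ro(Y_n)=\br\mid\tilde A_n,\tilde P_n)$ is given by the \emph{same} formula as $\P(\ro(X_n)=\br)$ with $(n,p_n)$ replaced by $(\tilde A_n,\tilde P_n)$, and the whole proposition reduces to continuity of that one formula in its parameters (Lemma~\ref{lem:RND}) combined with Proposition~\ref{cvAnPn}. Without this observation your route requires new and delicate asymptotics for core-conditioned counts; moreover your claimed asymptotic form is wrong in the relevant regime, since
\[
\Sq_{m,2\ell}\sim \frac{\sqrt3}{2\pi}\,12^m\Big(\frac92\Big)^{\ell}m^{-5/2}\ell^{1/2}\exp\Big(-\frac{9\ell^2}{4m}\Big)
\]
for $\ell^2/m$ in a compact set, and the Gaussian coupling term $\exp(-9\ell^2/(4m))$ — which does not factor as $C_\ell\cdot f(m)$ — is precisely what must be controlled.

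A second, related error is the regime $m=n-\are{\br}=o(n)$: for fixed $\eps$ the complement $\cro$ is \emph{not} small in area. It contains a boundary arc of the original map of length of order $p_n$ and, with high probability, has area of order $n$; this is why the goodness conditions impose $n/2\le\are{\br}\le(1-\delta)n$ (the upper bound guarantees $n-\are{\br}\gtrsim n$, so that the Stirling asymptotics above apply with perimeter squared over area in a compact set). Only the rescaled \emph{diameter} of $\cro$ becomes small as $\eps\to0$, and that fact is used for Proposition~\ref{Psmall}, not here. Finally, your item (iii) proposes to establish admissibility "in both models" directly, but there is no a priori scaling-limit control of $X_n$ — that is the theorem being proved. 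The paper establishes goodness only for $\ro(Y_n)$ (Lemma~\ref{lem:good}, via the bijective encoding, including a nontrivial Galton--Watson estimate for the inner perimeter bound $\pin\le\sqrt n/\delta$), and then \emph{deduces} goodness of $\ro(X_n)$ from the total-variation comparison itself; your plan as stated is circular on this point.
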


We denote by~$\dGH$ the Gromov--Hausdorff metric on isometry classes of metric spaces. The following proposition will be proved in Section~\ref{secbij}.

\begin{prop}[Leftover is small]\label{Psmall}
The following holds.
\begin{enumerate}[label=(\textit{\roman*})]
	\item For every $ \delta >0$, $\displaystyle\lim_{\eps \to 0}\lsup_{n\to\infty}\P\big( \dGH(a_n Y_n, a_n\ro(Y_n))>\delta\big)=0$\,.\label{PsmallY}
	\item For every $ \delta >0$, $\displaystyle\lim_{\eps \to 0}\lsup_{n\to\infty}\P\big( \dGH(a_n X_n, a_n\ro(X_n))>\delta\big)=0$\,.\label{PsmallX}
\end{enumerate}
\end{prop}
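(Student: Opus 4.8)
The plan is to reduce both statements to a single geometric fact: the complement $\cro$ is metrically negligible at scale $a_n$. Write $\mm$ for either $X_n$ or $Y_n$. Since $\ro(\mm)$ is obtained from $\mm$ by deleting the faces of $\cro(\mm)$, the set $V(\ro(\mm))$ is $V(\mm)$ minus the interior vertices of $\cro(\mm)$, and every deleted vertex lies within $\diam(\cro(\mm))$ of the retained interface from $v^-$ to $v^+$; this bounds the Hausdorff part of the Gromov--Hausdorff distance by $a_n\diam(\cro(\mm))$. The metric part amounts to checking that erasing the peninsula $\cro(\mm)$ does not create macroscopic shortcuts inside $\ro(\mm)$: as all of $\cro(\mm)$ sits beyond the ball $B_{r}(\mm)$ and is glued to $\ro(\mm)$ only along an interface of diameter at most $\diam(\cro(\mm))$, any geodesic excursion of $\mm$ into $\cro(\mm)$ begins and ends on that interface and can be rerouted at an extra cost $O(\diam(\cro(\mm)))$, so that $\dGH\big(a_n\mm,a_n\ro(\mm)\big)\le C\,a_n\diam(\cro(\mm))$. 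Granting this, it suffices to prove that for each $\delta>0$,
\[
\lim_{\eps\to0}\lsup_{n\to\infty}\P\big(a_n\diam(\cro(\mm))>\delta\big)=0,
\]
for $\mm=Y_n$ (part (i)) and for $\mm=X_n$ (part (ii)), together with the analogous smallness of $\are{\cro(\mm)}$ and $\per{\cro(\mm)}$.

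To establish this for the reference model $Y_n=\core(Q^\bullet_{n,3p_n})$ I would use the classical Schaeffer-type encoding of the \emph{bijectable} model $Q^\bullet_{n,3p_n}$, in which labels record graph distances to the basepoint $\rho$; this is the only place the bijection enters. The window $I$ has boundary length $\eps p_n$ sitting at absolute boundary position $\asymp p_n$ from the root, so the minimal ball $B_{r}$ reaching $I$ has, after one takes the hull with respect to $t_{1/3}$, a single remaining hole $\cro(Y_n)$ subtended by a boundary arc of length $O(\eps p_n)=O(\eps\sqrt n)$ — here one controls the ``gap'' between $t_{1/3}$ and the first return $v^+$ of $B_{r}$ to the boundary through the boundary/label process, the rare configurations where the construction fails being sent to $\wp$ with negligible probability by \Cref{cvAnPn}. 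The standard label estimates then bound the distances inside such a region by the label oscillation over the corresponding arc, of order $(\eps p_n)^{1/2}=O(\eps^{1/2}n^{1/4})$; hence $a_n\diam(\cro(Y_n))=O(\eps^{1/2})$, and likewise $\are{\cro(Y_n)}=o(n)$ and $\per{\cro(Y_n)}=o(p_n)$ in the required $\lim_{\eps\to0}\lsup_n$ sense. A short comparison between the core and the ambient map (they coincide near the localized hole, pinch points being sparse there) transfers these bounds from $Q^\bullet_{n,3p_n}$ to $Y_n$ itself.

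For the model under study $X_n=\tQ^\bullet_{n,p_n}$, which lacks a tractable bijection, I would argue by \emph{transfer}, using only counting. By the independence property of the restriction recorded at the end of \Cref{secrestr} and the uniformity of $X_n$, conditionally on $\ro(X_n)=\br$ the complement $\cro(X_n)$ is uniform over the simple-boundary quadrangulations filling $\br$ with the residual area and perimeter; the same description holds for $\cro(Y_n)$. Two ingredients then combine. First, the area and perimeter of the complement are essentially read off from the restriction — indeed $\are{\cro(X_n)}=n-\are{\ro(X_n)}$ — so the smallness of $\are{\cro(Y_n)},\per{\cro(Y_n)}$ obtained above, together with $\tilde A_n/n\to1$ and $\tilde P_n/p_n\to1$ from \Cref{cvAnPn}, transfers to $X_n$ through the total-variation bound of \Cref{Pclose}. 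Second, conditionally on a given (small) residual area and perimeter, $\cro(X_n)$ and $\cro(Y_n)$ have the \emph{same} law, so the diameter estimate for uniform simple-boundary quadrangulations of small parameters established for $\cro(Y_n)$ applies verbatim to $\cro(X_n)$. Together these yield $a_n\diam(\cro(X_n))\to0$ in the required sense, which is part (ii).

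The main obstacle is the bijective estimate underlying part (i): simultaneously controlling the gap so that the complement subtends only an $O(\eps\sqrt n)$ boundary arc, and bounding its diameter by the square root of that arc length via label oscillations, while comparing distances in the core to those in the ambient quadrangulation. The no-shortcut (peninsula) property justifying the reduction of the first paragraph — that deleting $\cro$ perturbs the internal distances of $\ro$ by only $O(\diam(\cro))$ — is of the same analytic nature and is the other delicate point, and both are naturally placed in \Cref{secbij}; everything else is bookkeeping plus the total-variation transfer of \Cref{Pclose}.
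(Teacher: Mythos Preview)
Your outline for part~\ref{PsmallY} is close in spirit to the paper's argument---both pass through the Schaeffer-type encoding of~$Q^\bullet_{n,3p_n}$ and bound the Gromov--Hausdorff error by a label-oscillation that vanishes as~$\eps\to0$---but your reduction to $C\,a_n\diam(\cro(\mm))$ is not justified as stated. The ``rerouting at extra cost $O(\diam(\cro))$'' step requires a path \emph{in~$\ro$} between the entry and exit points of an excursion, and nothing bounds $\ro$-distances between interface points by the intrinsic diameter of~$\cro$: the interface may have length~$\pin$ of order~$\sqrt n$, and routing through~$\rho$ costs~$\sim 2r$, neither of which is~$O(\diam(\cro))$. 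The paper instead bounds the distortion by the label range $\overline{\mathrm M}-\underline{\mathrm M}+1$ over the tree-region~$\cS$, using as anchor the merge vertex~$\underline{v}$ of the two successor-geodesics from~$v^\pm$; every vertex of~$\cS$ reaches~$\underline{v}$ in at most $\overline{\mathrm M}-\underline{\mathrm M}+1$ steps along its own successor-geodesic, and this is what makes the rerouting work.

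The essential gap is in part~\ref{PsmallX}. Your transfer argument assumes that the diameter bound from part~\ref{PsmallY} is a statement about uniform~$\SQ_{a,p}$ for small residual parameters~$(a,p)$, which could then be applied to~$\cro(X_n)$. But the bound is proved via the labeled-tree-bridge of the \emph{ambient} map~$Q^\bullet_{n,3p_n}$; it is not intrinsic to~$\cro(Y_n)$. Moreover, even on the coupling $\ro(X_n)=\ro(Y_n)$ from \Cref{Pclose}, the residual parameters differ---they involve $(n,p_n)$ for~$X_n$ versus $(\tilde A_n,\tilde P_n)$ for~$Y_n$---so the conditional laws of~$\cro(X_n)$ and~$\cro(Y_n)$ are mutually singular, and no total-variation control on the complement follows. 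Making your line work would require an \emph{a priori} uniform diameter bound for~$\tQ_{a,p}$ over a range of small parameters, which is essentially a weak form of the theorem you are trying to prove.

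The paper's way out is a \emph{resampling} trick that you do not anticipate. One defines a second restriction~$\rt$, exactly like~$\ro$ but targeting the boundary vertex at position roughly~$2p_n/3$ instead of~$p_n/3$ (this is the reason the paper chose a third rather than a half in the definition of~$I$). On the high-probability event that the two complements $\cro(Y_n)$ and~$\crt(Y_n)$ are disjoint, the label-range bound $\overline{\mathrm M}-\underline{\mathrm M}+1$, which depends only on the trees near~$t_{1/3}$, is a measurable function of~$\rt(\cdot)$ alone. Applying \Cref{Pclose} to~$\rt$ instead of~$\ro$ yields a coupling with $\rt(X_n)=\rt(Y_n)$, on which the bound for~$X_n$ equals the bound for~$Y_n$; the latter was already shown to vanish in part~\ref{PsmallY}. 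This two-restriction idea is the missing ingredient.
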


\begin{proof}[Proof of \Cref{mainthm}]
The result follows from a coupling argument. Thanks to Skorohod's embedding theorem, we may assume that we work on a probability space where the convergence~\eqref{cvyn} holds almost surely: let us denote by~$Y$ the limit. Let $f$ be a bounded uniformly continuous real-valued function on the set of isometry classes of compact metric spaces and~$\eta>0$. There exists~$\delta>0$ such that
\[\dGH(\cX,\cY)<3\delta \implies |f(\cX)-f(\cY)|<\eta\,.\]
Then
\begin{align}
\Big|\E\big[f(a_n X_n)-f(Y)\big]\Big|&\le \E\big[|f(a_n X_n)-f(Y)|\,\ind_{\{\dGH(Y,a_nX_n)< 3\delta\}}\big]+\E\big[|f(a_n X_n)-f(Y)|\,\ind_{\{\dGH(Y,a_nX_n)\ge 3\delta\}}\big]\notag\\
	&\le \eta+2\sup(|f|)\, \P\big(\dGH(Y,a_nX_n)\ge 3\delta\big)\,.\label{eqpfthm1}
\end{align}
We then write
\begin{align*}
\P\big(\dGH(Y,a_nX_n)\ge 3\delta\big)&\le \P\big(\dGH(Y,a_nY_n)\ge \delta \big) + \P\big(\dGH(a_nY_n,a_nX_n)\ge 2\delta\big).
\end{align*}
Due to the convergence $a_n Y_n\to Y$, the first term in the right-hand side tends to~$0$ as $n \to \infty$. The second term is bounded from above by
\[\P\big(\dGH(a_nY_n,a_n \ro(Y_n)) \ge \delta) + \P\big( \ro(X_n) \ne\ro(Y_n)\big)+ \P\big(\dGH(a_nX_n,a_n \ro(X_n)) \ge \delta)\]
for any $\eps>0$. \Cref{Psmall} entails that the first and last terms in the above display may be made arbitrarily small for large~$n$ when~$\eps$ is small enough. For such an $\eps>0$ fixed, using \Cref{Pclose}, we may furthermore assume by the maximal coupling theorem that $(X_n)_{n\in\N}$ is constructed on the same probability space as $(Y_n)_{n\in\N}$ and satisfies
\begin{equation*}\label{eqcoupling}
\lim_{n\to\infty}\P\big(\ro(X_n)=\ro(Y_n)\big)=1\,,
\end{equation*}
so that the middle term may also be made arbitrarily small for large~$n$. Summing up, we can fix an $\eps>0$ such that, for large~$n$, the right-hand side of~\eqref{eqpfthm1} is smaller than~$2\eta$; the result follows.
\end{proof}

\begin{rem}
Alternatively, one could also prove \Cref{mainthm} by first obtaining convergence of the finite dimensional distributions from Propositions~\ref{Pclose} and~\ref{Psmall}, where the latter yields that the restriction contains almost all points and does not distort the distances too much. And then, by proving tightness from that of $\{a_n Y_n:n\in \N\}$, \Cref{Pclose} and \Cref{Psmall}.\ref{PsmallX}.
\end{rem}

We insist on the fact that the above method of proof works in a fairly general sense. More precisely, we inferred the convergence $a_n X_n\to Y$ from $a_n Y_n\to Y$ and the two propositions involving restriction functions. Provided these estimates with adequate restriction functions and the convergence of a reference model of maps, we can conduct the same reasoning. It might also be adaptable to other metrics and objects, not necessarily involving maps.

\section{Comparison of restrictions}\label{seccomp}

In this section, we prove \Cref{Pclose}. From the classical bijective construction of~$Q_{n,p}^\bullet$, we will prove in Section~\ref{secbij} that the restrictions are ``good'' with high probability (\Cref{lem:good}). Assuming this fact, we obtain \Cref{Pclose} by showing that the law of good restrictions in~$X_n$ and in~$Y_n$ are close in total variation distance (\Cref{lem:RND}). The latter fact is obtained from exact counting of quadrangulations.

\subsection{Law of the restrictions}\label{seclawrest}

Fix $\eps>0$ and $n \ge 1$. Let us come back to the definition of the restriction~$\ro$ of a pointed quadrangulation $(\bq,\rho)$ with a simple boundary and its complement. When the procedure works, $\ro(\bq,\rho)$ is a rooted quadrangulation~$\br$ with a simple boundary given along with three distinguished points~$\rho$, $v^-$ and~$v^+$, the last two being on the boundary as in Figure~\ref{fig:constraints}. If~$\br$ is such a map, we say that~$\br$ is an \emph{$(n,\eps)$-restriction map}; we denote by $\ple$ the number of boundary edges between the origin of the root and~$v^-$ in counterclockwise direction and $\pri$ the number of boundary edges between the origin of the root and~$v^+$ in clockwise direction. Finally, let~$\pin$ be the number of boundary edges in between~$v^-$ and~$v^+$ in counterclockwise direction. Since~$\br$ is obtained as a restriction, it is equal to the hull of some ball of some quadrangulation with a simple boundary. The construction of~$\br$ as an $(n,\eps)$-restriction map in particular forces the inequalities
\[
\big\lfloor (\tfrac{1}3-\eps)\, p_{n}\big\rfloor \le \ple \le \lfloor p_{n}/3\rfloor\,,\qquad \pin\ge 1,\qquad \pri\ge 1\,.
\]
If, furthermore, $\br=\ro(\bq,\rho)$ for some $\bq\in\SQ_{n'\!,p'}$, this imposes the additional constraints
\begin{equation}\label{constrrestq}
n'\ge \are{\br}\,,\qquad p'-\pri\ge \lfloor p_{n}/3\rfloor+1\,.
\end{equation}
The first is a basic area constraint, while the second translates the fact that~$v^+$ comes strictly after~$t_{1/3}$. Since $\ple \leq \lfloor p_n/3\rfloor$, the latter implies that the red part of the boundary in Figure~\ref{fig:constraints} has length $p'-\ple-\pri\ge 1$.

\begin{figure}[ht!]
	\centering\includegraphics[width=5.5cm]{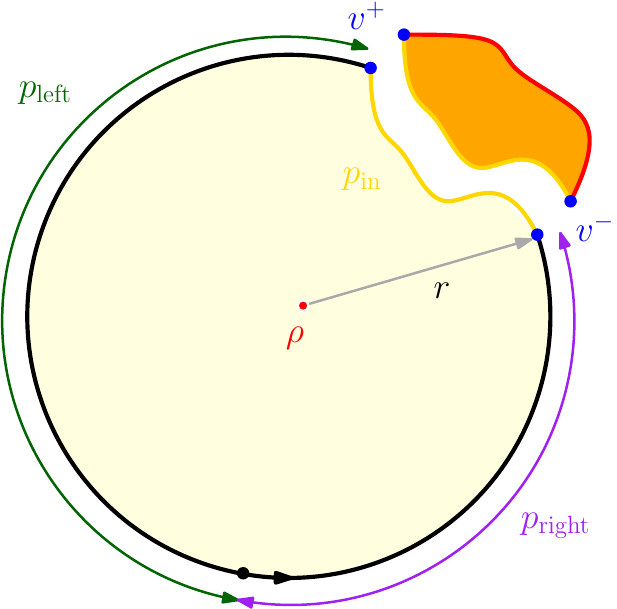}
	\caption{Notation for a restriction map. If the $(n,\eps)$-restriction map~$\br$ appears as the restriction of a quadrangulation with area~$n'$ and perimeter~$p'$, then the constraints~\eqref{constrrestq} must be fulfilled.}
	\label{fig:constraints}
\end{figure}

We denote by
\[
\Sq_{n,p}\de \#\SQ_{n,p}\qquad\text{ for }n\in\N,\, p\in2\N\,,
\]
set $\Sq_{0,2}\de 1$, and set $\Sq_{n,p}\de 0$ otherwise.

\begin{lemma}\label{lemRnpm}
Let~$n'\in\N$, $p'\in2\N$ be such that $p'\ge p_n/2$, and let $\br= (\br, \rho, v^-, v^+)$ be an $(n,\eps)$-restriction map. Then
\[
\P\Big(\ro\big(\tQ_{n'\!,p'}^{\bullet}\big)= \br\Big)=\frac{\Sq_{n'-\are{\br},\,p'-\per{\br}+2\pin}}{(n'+p'/2+1)\,\Sq_{n'\!,p'}} \ind_{\{p'-\pri> p_{n}/3\}}.
\]
\end{lemma}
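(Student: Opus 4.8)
The plan is to compute the probability on the left-hand side by a direct enumeration argument: both the numerator and the denominator should count quadrangulations with a simple boundary, weighted by the pointing.

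\medskip

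\noindent\textbf{Setup.} First I would recall that $\tQ_{n'\!,p'}^\bullet$ is uniform over pointed maps $(\bq,\rho)$ with $\bq\in\SQ_{n'\!,p'}$ and $\rho\in V(\bq)$; the total number of such pointed maps is $\big(\#V(\bq)\big)\cdot\#\SQ_{n'\!,p'}$, and by Euler's formula a quadrangulation with a simple boundary having $n'$ inner faces and perimeter $p'$ has exactly $n'+p'/2+1$ vertices (each inner face has degree $4$, the external face degree $p'$, so $2\#E = 4n'+p'$, and $\#V = \#E-\#F+2 = (2n'+p'/2)-(n'+1)+2 = n'+p'/2+1$). This explains the denominator $(n'+p'/2+1)\,\Sq_{n'\!,p'}$.

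\medskip

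\noindent\textbf{Counting the favourable event.} Then I would argue that the event $\{\ro(\tQ_{n'\!,p'}^\bullet)=\br\}$, as a subset of pointed maps, is in bijection with the set of ways to ``complete'' $\br$ back into a pointed quadrangulation $(\bq,\rho)\in\SQ_{n'\!,p'}^\bullet$ whose restriction is exactly $\br$. By the independence property emphasised at the end of Section~\ref{secrestr} — namely that any $\bq'$ obtained by completing $\br$ along the boundary segment between $v^-$ and $v^+$ satisfies $\ro(\bq',\rho)=\br$ — this completion is an arbitrary filling of the ``hole'' bounded by the arc of $\partial\br$ from $v^-$ to $v^+$ (of length $\pin$) together with the freshly created boundary arc from $v^+$ to $v^-$ going the other way (of length $p'-\per{\br}+\pin$: the new boundary has length $p'$, minus the $\per{\br}-\pin$ edges of $\partial\br$ that remain on the boundary, but one must be slightly careful here). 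The region to be glued in is a quadrangulation with a \emph{simple} boundary — simple because $v^-$ and $v^+$ sit at alternating distances $r,r+1$ from $\rho$ so the glueing arc cannot pinch — with $n'-\are{\br}$ inner faces and boundary length equal to (arc length $\pin$) $+$ (new boundary arc length) $=\pin + (p'-\per{\br}+\pin) = p'-\per{\br}+2\pin$. The pointing $\rho$ is already determined inside $\br$, so it contributes no extra factor. Hence the number of favourable pointed maps is exactly $\Sq_{n'-\are{\br},\,p'-\per{\br}+2\pin}$, and dividing by the denominator gives the claimed formula. The indicator $\ind_{\{p'-\pri>p_n/3\}}$ comes from the second constraint in~\eqref{constrrestq}: if it fails, $v^+$ does not come strictly after $t_{1/3}$ and no completion can have restriction $\br$, so the probability is $0$; conversely one checks the first constraint $n'\ge\are{\br}$ and the arc-length constraints on $\pri,\pin$ are automatically compatible (and when violated $\Sq_{n'-\are{\br},\cdot}=0$ by the convention $\Sq_{n,p}=0$ for $n<0$, so no further indicator is needed).

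\medskip

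\noindent\textbf{Main obstacle.} The delicate point — and the step I would spend the most care on — is the precise bookkeeping of the perimeter of the glued-in piece and the verification that this glueing always produces a \emph{simple} boundary (so that the count is genuinely $\Sq$ and not $\Gq$), together with checking that the map $\br$ determines $r$, $v^-$, $v^+$, $\ple$, $\pri$, $\pin$ unambiguously from its internal structure (this is exactly why $I$ was defined ``from within'' $\ro(\bq,\rho)$). One must confirm that exploring $\bq$ from $\rho$ up to radius $r$ and taking the hull recovers precisely the distinguished data of $\br$, so that the map-plus-filling correspondence is a genuine bijection with no over- or under-counting; the alternation of distances along the arc from $v^-$ to $v^+$ is what guarantees the glued piece attaches along a genuine simple arc and that $r$ is read off correctly as the minimal radius hitting $I$.
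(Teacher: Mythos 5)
Your proposal is correct and follows essentially the same route as the paper: Euler's formula gives the denominator $(n'+p'/2+1)\,\Sq_{n'\!,p'}$ as the number of pointed maps, and the cut-and-glue bijection between $\{\bq\in\SQ_{n'\!,p'}:\ro(\bq,\rho)=\br\}$ and quadrangulations with a simple boundary of area $n'-\are{\br}$ and perimeter $\pin+(p'-\per{\br}+\pin)$ gives the numerator, with the indicator accounting for the constraint $p'-\pri\ge\lfloor p_n/3\rfloor+1$ from~\eqref{constrrestq}. The ``main obstacle'' you flag is resolved in the paper exactly as you anticipate: the set~$I$ is defined from within $\ro(\bq,\rho)$, so the balls of any completion agree with those of $\bq$ up to the radius at which $I$ is hit, and the restriction of any completion is again~$\br$.
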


\begin{proof}
First of all, observe by Euler's characteristic formula that any element of~$\GQ_{n'\!,p'}$ has $n'+p'/2+1$ vertices so the number of pointed quadrangulations with a simple boundary having perimeter~$p'$ and area~$n'$ is the above denominator. 

The result will then follow if we show that the number of maps $\bq \in \SQ_{n'\!,p'}$ such that $\ro(\bq,\rho) = \br$ is equal to the numerator multiplied by the indicator, that is, the number of quadrangulations with a simple boundary having $n'-\are{\br}$ inner faces and perimeter $p'-\per{\br}+2\pin$, that furthermore satisfy~\eqref{constrrestq}. This fact is obtained from a bijection between the set of maps $\bq \in \SQ_{n'\!,p'}$ such that $\ro(\bq,\rho) = \br$ and the set of such quadrangulations with a simple boundary.

More precisely, recalling Figure~\ref{RBall}, observe that a map $\bq \in \SQ_{n'\!,p'}$ such that $\ro(\bq,\rho) = \br$ may be reconstructed from~$\br$ and $\cro(\bq,\rho)$ by identifying the proper parts of the respective boundaries between the vertices~$v^-$ and~$v^+$. Furthermore, choosing as root for instance for $\cro(\bq,\rho)$ the oriented edge directly following~$v^-$ in the contour of the boundary and dropping the two distinguished vertices on the boundary gives a quadrangulation with a simple boundary having $\are{\bq}-\are{\br} = n'-\are{\br}$ inner faces and perimeter $\pin + \per{\bq}-(\per{\br}-\pin)=p'-\per{\br}+2\pin$, and that satisfies~\eqref{constrrestq}. The data of this map together with~$\br$ still allows to reconstruct~$\bq$.

Reciprocally, gluing on the boundary of~$\br$ from~$v^-$ to~$v^+$ any quadrangulation with a simple boundary having area $n'-\are{\br}$ and perimeter $p'-\per{\br}+2\pin$ where~$n'$, $p'$ satisfy~\eqref{constrrestq} gives a pointed quadrangulation with a simple boundary whose restriction is~$\br$. This is because the balls are the same up to the radius where the set~$I$ is reached, and the latter set only depends on~$\br$, not on the glued part. The result follows.
\end{proof}

\subsection{Good restrictions}
For $\delta  \in (0, \eps)$, an $(n,\eps)$-restriction map~$\br$ is called \emph{$(n,\delta)$-good} if
\newlength{\alat}
\settowidth{\alat}{$\ple$}
\begin{alignat}{2}
	\left\lfloor \left(\frac{1}3-\eps\right)\, p_{n}\right\rfloor & \mathbin{\le}{} &\makebox[\alat]{$\ple$}  & \mathbin{\le} \left( \frac{1 }{3}- \delta \right)p_{n}\,,\label{eq:perim1}\\
	 \frac{p_{n}}{2}   & \mathbin{\le}{}  &\pri & \mathbin{\le} \left( \frac{2 }{3}- \delta \right)p_{n}\,,\label{eq:perim2}\\
	\frac{n}{2} & \mathbin{\le}{} &\makebox[\alat]{$\are{\br}$} & \mathbin{\le} (1- \delta)\, n\,, \label{eq:volume}\\
		&& \makebox[\alat]{$\pin$} & \mathbin{\le} \frac{\sqrt{n}}\delta\,.\label{eq:perim3}
\end{alignat}
Note that the inequality involving~$\eps$ always holds for $(n,\eps)$-restriction maps. In words, a restriction is $(n,\delta)$-good if its parameters are in the proper scales: the perimeters~$\pin$, $\ple$ and~$\pri$ are of the same order as~$p_n$ and the volume is of order~$n$. 
See Figure~\ref{good}.

\begin{figure}[ht!]
	\centering\includegraphics[width=7.5cm]{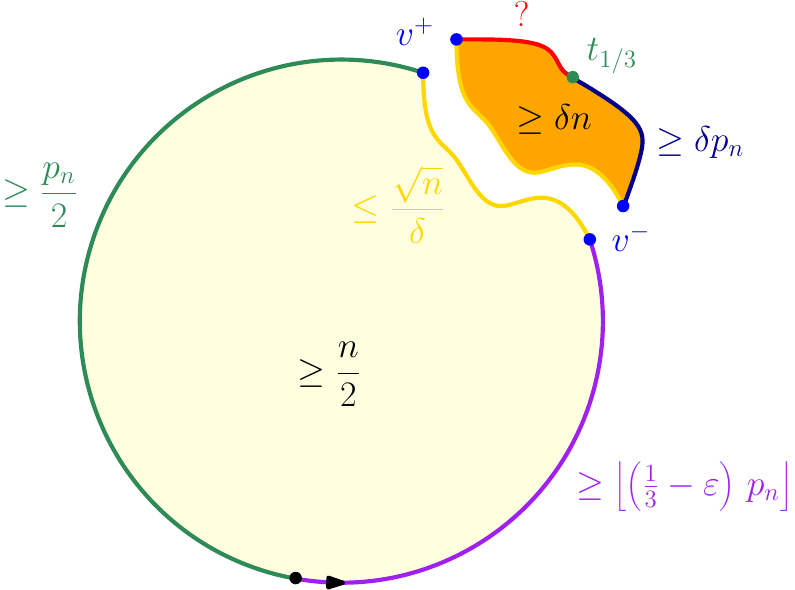}
	\caption{Inequalities defining $(n,\delta)$-good $(n,\eps)$-restriction maps. The length of the red part of the boundary depends on the map of which the restriction map is the restriction. If this map has perimeter~$p_n$, then the length of this red part is between~$\delta p_n$ and~$p_n$.}
	\label{good}
\end{figure}

The following lemma will be proved during the next section from classical bijective constructions. Note that such an estimate is to be expected from usual random maps scaling results.

\begin{lemma}\label{lem:good}
For every $\eta >0$, there exists an arbitrarily small $\eps>0$ and a $\delta\in(0,\eps)$ such that 
\[\liminf_{n\to \infty}\P \Big(\ro\big(\core\big(Q_{n, 3p_n}^{\bullet}\big)\big) \text{ is $(n,\delta)$-good}\Big) \ge 1-\eta. \]
\end{lemma}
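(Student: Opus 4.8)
\textbf{Proof plan for \Cref{lem:good}.}
The plan is to translate the four defining inequalities \eqref{eq:perim1}--\eqref{eq:perim3} into events that hold with probability tending to~$1$, using the scaling limit \eqref{cvCore} together with \Cref{cvAnPn}. First I would record that, by \Cref{cvAnPn}, the core $Y_n=\core(Q_{n,3p_n}^\bullet)$ has area $\tilde A_n\sim n$ and perimeter $\tilde P_n\sim 3p_n$ in probability, and that $\core(Q_{n,3p_n}^\bullet)\ne\wp$ with probability tending to~$1$; in particular the standing hypothesis $p=\tilde P_n\ge p_n/2$ needed to even define $\ro(Y_n)$ is satisfied with high probability. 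Granting this, the upper bound $\are{\br}\le(1-\delta)n$ and the lower bound $\are{\br}\ge n/2$ in \eqref{eq:volume} will follow once we know that the complement $\cro(Y_n)$ has area $o(n)$ with probability close to~$1$; this is exactly the content of \Cref{Psmall}\ref{PsmallY} (the Gromov--Hausdorff closeness of $a_nY_n$ and $a_n\ro(Y_n)$ forces the removed part $\cro(Y_n)$ to carry a vanishing proportion of the volume, up to choosing $\eps$ small and then $\delta$ smaller still). Here is the one genuine subtlety: \Cref{lem:good} is used to \emph{prove} \Cref{Pclose}, and \Cref{Psmall}\ref{PsmallY} is itself proved in Section~\ref{secbij} from the bijective encoding; since the lemma is announced as ``proved during the next section from classical bijective constructions,'' the honest route is to re-derive the area and perimeter controls directly from the Schaeffer-type encoding of $Q_{n,3p_n}^\bullet$ rather than quoting \Cref{Psmall}. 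Concretely, in the label process of the encoding, the radius $r$ of the relevant ball is $O(n^{1/4})$ and the boundary of $\ro(Y_n)$ between $v^-$ and $v^+$ follows a geodesic-type frontier, so the area it cuts off and the perimeter $\pin$ it creates are controlled by the local fluctuations of the encoding walk/label function on a boundary arc of length $\eps p_n=\Theta(\eps\sqrt n)$.

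Second, I would handle the perimeter inequalities \eqref{eq:perim1} and \eqref{eq:perim2}. The vertex $v^-$ is by construction the last vertex of $I\cap B_r(Y_n)$, and $I$ consists of the boundary vertices numbered from $\lfloor(\frac13-\eps)p_n\rfloor$ to $\lfloor p_n/3\rfloor$; hence $\big\lfloor(\frac13-\eps)p_n\big\rfloor\le\ple\le\lfloor p_n/3\rfloor$ automatically, and to get the strict improvement $\ple\le(\frac13-\delta)p_n$ it suffices to show that, with probability tending to~$1$, the smallest ball hitting $I$ actually hits $I$ \emph{well before} its far endpoint $t_{1/3}$ --- equivalently that the graph distance from $\rho$ to the boundary arc $I$ is attained, with high probability, at a vertex at boundary-distance at least $\delta p_n$ from $t_{1/3}$. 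This is a statement about the profile of distances from a uniform point to the boundary, and it follows from the convergence \eqref{cvCore} to the Brownian disk, whose boundary-to-interior distance function has no atoms: for $\eps$ fixed, the set of limiting configurations for which the minimiser lands in the extreme $\delta$-fraction of the arc has probability going to $0$ as $\delta\to0$. Similarly, for $v^+$ one uses that the ball $B_r(Y_n)$ of this minimal radius $r$ also reaches the complementary arc between $t_{1/3}$ and the root --- again a high-probability event by the homeomorphism-to-a-disk property \cite{BetQ} and the non-degeneracy of the Brownian disk metric --- and that the first such vertex $v^+$ lands strictly between boundary-positions $\frac12 p_n$ and $(\frac23-\delta)p_n$ with probability close to~$1$, since its limiting position is a.s.\ in the open arc $(t_{1/3},\text{root})$ and has a continuous law.

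Third, \eqref{eq:perim3}: the created perimeter $\pin$ is the boundary length of $\cro(Y_n)$ along the part shared with $\ro(Y_n)$, i.e.\ the frontier cut out by the hull of a ball of radius $r=O(n^{1/4})$. In the Brownian-disk scaling this frontier has length $\Theta(\sqrt n)$ almost surely, so $\pin\le\sqrt n/\delta$ holds with probability tending to~$1$ as $\delta\to0$; at the discrete level one bounds $\pin$ by the number of boundary edges of the excised region, which is controlled by the encoding walk and its supremum over an arc of order $\sqrt n$ exactly as in the standard tightness estimates for quadrangulations with a boundary. Putting the four pieces together: fix $\eta>0$, choose $\eps>0$ small enough that the ``extreme-minimiser'' and ``large-complement'' limiting events have probability $<\eta/8$, then choose $\delta\in(0,\eps)$ small enough that each of \eqref{eq:perim1}--\eqref{eq:perim3} fails with limiting probability $<\eta/8$; a union bound gives $\liminf_n\P(\ro(Y_n)\text{ is }(n,\delta)\text{-good})\ge1-\eta$, as claimed.

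\textbf{Main obstacle.} The real work is not the scaling-limit bookkeeping but making the third and the volume bounds quantitative \emph{at the discrete level}, because these must be obtained inside Section~\ref{secbij} from the Schaeffer-type bijection rather than from the already-established Brownian-disk convergence. Concretely, one needs a uniform control --- over all $n$ --- on the number of faces and the boundary length enclosed by the hull of a ball of radius $\asymp n^{1/4}$ around a uniform point, in $Q_{n,3p_n}^\bullet$ and then transferred to its core; this is a tightness-type estimate for the labelled-tree encoding, and getting the two-sided area bound $\frac n2\le\are{\br}\le(1-\delta)n$ with a rate good enough to survive the later total-variation comparison in \Cref{lem:RND} is where most of the effort --- and the interaction with the precise combinatorial setup of Section~\ref{secbij} --- will go.
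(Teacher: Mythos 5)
Your overall architecture (union bound over the four defining inequalities, with the perimeter conditions \eqref{eq:perim1}--\eqref{eq:perim2} handled via the scaling limit and the a.s.\ uniqueness and continuous law of the minimiser of the boundary label/distance process) matches the paper's proof. But there are two genuine gaps. First, a local error: Gromov--Hausdorff closeness of $a_nY_n$ and $a_n\ro(Y_n)$ does \emph{not} force $\cro(Y_n)$ to carry a vanishing proportion of the area --- arbitrarily many faces can sit in a metrically tiny region --- so \Cref{Psmall}.\ref{PsmallY} cannot give \eqref{eq:volume}. You half-retract this by saying the honest route is through the encoding, and indeed the paper gets \eqref{eq:volume} from the deterministic sandwich \eqref{estvol} involving the tree-vertex sets $\cS$ and $\cS^{\bge}$, whose renormalised cardinalities $n^{-1}\#\cS_{n,\eps}$ and $n^{-1}\#\cS^{\bge}_{n,\eps}$ converge by \cite{BetQ} (the latter with no atom at $0$); but your proposal never supplies a substitute for this.

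The more serious gap is \eqref{eq:perim3}. You assert that the frontier created by the hull has length $\Theta(\sqrt n)$ ``in the Brownian-disk scaling'' and is ``controlled by the encoding walk and its supremum'' as in standard tightness estimates. Neither mechanism works: the boundary length of the hull is not a functional that passes to the Gromov--Hausdorff limit, and the supremum of the label process controls distances, not the number of frontier vertices. The paper explicitly flags that this bound \emph{does not} follow from the scaling limit results and proves it by a separate first-moment computation: $\pin$ is bounded via \eqref{estpin} by $\#\cS^{\beq}$, the number of \emph{first} vertices of label $r$ in the trees rooted on the relevant boundary arc; by absolute continuity one may replace these trees by i.i.d.\ critical geometric Galton--Watson trees with uniform $\{-1,0,1\}$ label increments, for which the expected number of such first-passage vertices per tree is exactly $1$ (an explicit generating-function identity from \cite{CM12}), giving $\E[\#\cS^{\beq}_{n,\eps}]=O(p_n)=O(\sqrt n)$ and then Markov's inequality. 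Your ``main obstacle'' paragraph correctly locates where the difficulty lies, but the argument you sketch for overcoming it points at the wrong tool, so the proof as proposed does not close.
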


The key is then to notice that, if~$\br$ is an $(n,\delta)$-good restriction map, then it may appear as the $(n,\eps)$-restriction of quadrangulations $\bq \in \SQ_{n'\!,p'}$ as soon as $n'\approx n$ and $p' \approx p_{n}$ and that, for such~$n'$, $p'$, the probabilities $\P\big(\ro\big(\tQ^{\bullet}_{n',p'}\big)=\br\big)$ are all very close.

\begin{lemma}\label{lem:RND}
For any $\eta >0$, $\eps>0$ and $\delta >0$, there exist $n_{0}\in\N$ and $\zeta >0$ such that the following holds. For any $n \ge n_{0}$, any $(n,\delta)$-good $(n,\eps)$-restriction map $\br= (\br,\rho,v_{-},v_{+})$ and any $n'\in\N$, $p'\in 2\N$ such that $\big|\frac{n'}{n}-1\big| \le \zeta$ and $\big|\frac{p'}{p_{n}}-1\big| \le \zeta$, we have
\[ \left|\frac{\P\Big(\ro\big(\tQ^{\bullet}_{n,p_n}\big) = \br\Big)}{\P\Big(\ro\big(\tQ^{\bullet}_{n'\!,p'}\big) = \br\Big)}-1 \right| \le \eta\,.\]
\end{lemma}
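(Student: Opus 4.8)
The plan is to derive this from the exact formula in \Cref{lemRnpm}. Writing $\mathbf{r}=(\br,\rho,v^-,v^+)$, that lemma gives
\[
\P\Big(\ro\big(\tQ^{\bullet}_{m,q}\big)=\br\Big)=\frac{\Sq_{\,m-\are{\br},\,q-\per{\br}+2\pin}}{(m+q/2+1)\,\Sq_{m,q}}\,\ind_{\{q-\pri>p_n/3\}}
\]
for any admissible pair $(m,q)$. So the ratio in the statement, for $(m,q)\in\{(n,p_n),(n',p')\}$, is a product of three pieces: the polynomial prefactor $(n'+p'/2+1)/(n+p_n/2+1)$, the ratio $\Sq_{n',p'}/\Sq_{n,p_n}$ of counts of simple-boundary quadrangulations, and the ratio $\Sq_{\,n-\are{\br},\,p_n-\per{\br}+2\pin}\,/\,\Sq_{\,n'-\are{\br},\,p'-\per{\br}+2\pin}$ of the ``leftover'' counts. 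One must also check the two indicators are both equal to $1$: here the $(n,\delta)$-goodness is used, since $p_n-\pri\ge p_n-(\frac23-\delta)p_n=(\frac13+\delta)p_n>p_n/3$, and for $(n',p')$ the same holds provided $\zeta$ is small enough in terms of $\delta$ (as $p'-\pri\ge(1-\zeta)p_n-(\frac23-\delta)p_n$). First I would record the exact asymptotics for $\Sq_{n,p}$; the relevant estimate (going back to Bouttier--Guitter, see also the references in the paper on simple-boundary quadrangulations) is of the form
\[
\Sq_{n,p}\;=\;C\,(1+o(1))\;8^{\,n}\,\Big(\tfrac{9}{4}\Big)^{p/2}\,n^{-5/2}\,p\,f(p/\sqrt n)
\]
uniformly for $p$ in the regime $p\asymp\sqrt n$, where $f$ is a fixed continuous positive function; in fact for the comparison I only need that $\log \Sq_{n,p}$ is, up to an $o(1)$ error uniform on $p\asymp\sqrt n$, a smooth function of $(n,p)$ of the shape $n\log 8+\frac p2\log\frac94-\frac52\log n+\log p+\log f(p/\sqrt n)$.

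The key steps, in order. Step 1: fix the regimes. By $(n,\delta)$-goodness, $\are{\br}\in[n/2,(1-\delta)n]$ and $\pin\le\sqrt n/\delta$, so both the numerator and denominator of the leftover ratio are $\Sq$-values at area $\Theta(n)$ (at least $\delta n$) and perimeter $\Theta(\sqrt n)$ (between roughly $\delta p_n$ and $2p_n$, using $\per{\br}=\pri+\pin+(\text{red part})$ and the goodness inequalities). Hence \emph{all four} $\Sq$-values appearing in the ratio are in the uniform asymptotic regime. Step 2: plug the asymptotic expansion into each of the three pieces. The polynomial prefactor is $1+O(\zeta)$ trivially. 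For $\Sq_{n',p'}/\Sq_{n,p_n}$: since $\log 8\cdot(n'-n)$, $\frac12\log\frac94\cdot(p'-p_n)$, $-\frac52\log(n'/n)$, $\log(p'/p_n)$ and $\log\big(f(p'/\sqrt{n'})/f(p_n/\sqrt n)\big)$ are each small when $|n'/n-1|\le\zeta$ and $|p'/p_n-1|\le\zeta$ — here one uses $|n'-n|\le\zeta n$ with $n\asymp(p_n/2\alpha)^2$, so $n'-n=O(\zeta n)$ which is \emph{not} small on its own; but it is multiplied against nothing, it sits inside $8^{n'-n}$, so this term is the delicate one (see below). Step 3: the leftover ratio. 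With $m_0:=n-\are{\br}$, $q_0:=p_n-\per{\br}+2\pin$ and $m_1:=n'-\are{\br}$, $q_1:=p'-\per{\br}+2\pin$, we have $m_1-m_0=n'-n$ and $q_1-q_0=p'-p_n$, exactly the same differences, and $m_0,m_1\ge\delta n$, $q_0,q_1\asymp\sqrt n$. The factors $8^{m_0}(9/4)^{q_0/2}$ versus $8^{m_1}(9/4)^{q_1/2}$ give precisely $8^{-(n'-n)}(9/4)^{-(p'-p_n)/2}$, which \emph{cancels} the corresponding factor from Step 2. The remaining factors $m^{-5/2}q\,f(q/\sqrt m)$: ratios of these between subscripts $(m_0,q_0)$ and $(m_1,q_1)$ are $1+o(1)$ uniformly, because $m_1/m_0=1+O(\zeta n/\delta n)=1+O(\zeta/\delta)$, $q_1/q_0=1+O(\zeta p_n/q_0)=1+O(\zeta/\delta)$ (using $q_0\ge\delta'\sqrt n$), and $q_1/\sqrt{m_1}-q_0/\sqrt{m_0}=O(\zeta/\delta)$ with $f$ uniformly continuous on compacts. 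Step 4: combine — the exponential factors cancel between Steps 2 and 3, leaving only polynomial/continuous ratios each of the form $1+O(\zeta/\delta)$; choose $\zeta$ small enough (depending on $\delta$, $\eta$) and $n_0$ large enough that the total is within $\eta$ of $1$.

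The main obstacle is genuinely the cancellation of the exponential factors $8^{n}$ and $(9/4)^{p/2}$: individually, $8^{n'-n}$ and $(9/4)^{(p'-p_n)/2}$ are \emph{enormous} (exponential in $\zeta n\asymp\zeta p_n^2$), so no crude bound on any single $\Sq$-ratio works — the argument must be organized so that the numerator's two $\Sq$-factors and the denominator's two are paired as $\Sq_{n,p_n}/\Sq_{n',p'}$ \emph{and} $\Sq_{\,n'-\are{\br},\,\cdot}/\Sq_{\,n-\are{\br},\,\cdot}$ so that the bad exponentials appear with opposite signs and cancel exactly (this works because the area and perimeter \emph{differences} $n'-n$ and $p'-p_n$ are identical in the two pairs, which is why \Cref{lemRnpm} was engineered with $\are{\br}$ subtracted from both $n'$ and the output area). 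What is left after cancellation is only the subexponential part of the asymptotics — the powers $n^{-5/2}$, the linear $p$, and the scaling function $f$ — and for those a soft uniform-continuity argument on compact sets of $p/\sqrt n$ suffices, with no need for precise error terms. A secondary point to handle carefully is that one needs the $\Sq_{n,p}$ asymptotic to hold \emph{uniformly} over the whole relevant window of $(n,p)$ (areas from $\delta n$ to $n$, perimeters from $\delta'\sqrt n$ up to $2p_n$); this uniform local-limit-type estimate for simple-boundary quadrangulations is classical (it is exactly the sort of ``exact counting'' the introduction promises) and I would cite it, or re-derive it from the known generating-function formula for $\Sq_{n,p}$, rather than prove it from scratch here.
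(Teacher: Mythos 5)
Your proposal follows essentially the same route as the paper: it starts from \Cref{lemRnpm}, verifies the indicator and parity constraints via $(n,\delta)$-goodness, plugs in the Bouttier--Guitter counting asymptotics, and exploits the exact cancellation of the exponential factors (possible because the area and perimeter differences $n'-n$ and $p'-p_n$ are the same in the two pairs of $\Sq$-values) before controlling the remaining subexponential factors by uniform continuity on a compact range of $p^2/n$. The only caveat is that your quoted asymptotic has the wrong constants --- from \eqref{e.Sq} and Stirling one gets $\Sq_{m,2\ell}\sim \frac{\sqrt3}{2\pi}\,12^m\left(\frac92\right)^{\ell} m^{-5/2}\,\ell^{1/2}\exp\left(-\frac{9\ell^2}{4m}\right)$ rather than the form you wrote --- but since only the structural shape of the estimate (exponential in area, exponential in perimeter, polynomial corrections, and a continuous function of $p/\sqrt n$) enters the argument, this does not affect its validity.
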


\begin{proof} 
This relies on \Cref{lemRnpm} and the explicit formula for $\Sq_{m,2\ell}$ found in \cite{BouttierGuitter}:
\begin{align}\label{e.Sq}
	\Sq_{m,2\ell}&= 3^{-\ell}\frac{(3\ell)!}{\ell!(2\ell-1)!}3^m \frac{(2m+\ell-1)!}{(m-\ell+1)!(m+2\ell)!} \qquad\qquad m,\,\ell\ge 1.
\end{align}
Fix $\eta$, $\eps$, $\delta>0$. First, notice that, when~$\zeta$ is sufficiently small, then, for any $(n, \delta)$-good $(n,\eps)$-restriction map~$\br$ and every $n'\in\N$, $p'\in 2\N$ satisfying $\big|\frac{n'}{n}-1\big| \le \zeta$ and $\big|\frac{p'}{p_{n}}-1\big| \le \zeta$, it holds that $n'-\are{\br}\in\N$, $p'-\per{\br}+2\pin\in 2\N$ and $p'-\pri> p_{n}/3$. In this case, by \Cref{lemRnpm},
\begin{equation}\label{eq:ratiobounded}
\frac{\P\Big(\ro\big(\tQ^{\bullet}_{n,p_n}\big) = \br\Big)}{\P\Big(\ro\big(\tQ^{\bullet}_{n'\!,p'}\big) = \br\Big)}
	= \frac{\Sq_{n-\are{\br},\,p_n-\per{\br}+2\pin}}{(n+p_n/2+1)\,\Sq_{n,p_n}} 
	\times \frac{(n'+p'/2+1)\,\Sq_{n'\!,p'}}{\Sq_{n'-\are{\br},\,p'-\per{\br}+2\pin}}\,.
\end{equation}

From~\eqref{e.Sq} and the Stirling formula, we obtain that, for any fixed compact interval $K\subseteq (0,\infty)$, as~$m$, $\ell$ tend to infinity in such a way that $\ell^2/ m \in K$,
\[
\Sq_{m,2 \ell} \sim \frac{\sqrt{3}}{2 \pi}\, 12^m \left(\frac92\right)^\ell m^{-5/2}\, \ell^{1/2} \exp\left(-\frac{9\ell^2}{4m}\right)\,.
\]
Note that the 4 areas and 4 perimeters appearing in the right-hand side of~\eqref{eq:ratiobounded} all tend to infinity from the assumptions on~$n'$, $p'$ and the fact that~$\br$ is $(n, \delta)$-good. Furthermore, for~$\zeta$ small enough and~$n$ large enough, there exist a compact interval $K\subseteq (0,\infty)$ such that each of the 4 corresponding ratios perimeter squared over area belong to~$K$. Using the above equivalent, we deduce that~\eqref{eq:ratiobounded} can be made arbitrarily close to~$1$, provided that~$\zeta$ is small enough and~$n$ large enough.
\end{proof}

We can now gather the above lemmas and \Cref{cvAnPn} in order to prove \Cref{Pclose}.

\begin{proof}[Proof of \Cref{Pclose}]
Recall the notation $X_n= \tQ_{n,p_{n}}^{\bullet}$ and $Y_n= \core\big(Q_{n, 3p_n}^{\bullet}\big)$. Fix $\eta >0$ and find, from Lemmas~\ref{lem:good} and~\ref{lem:RND}, positive numbers~$\eps$, $\delta$, $\zeta>0$ and $n_{0}\in\N$ such that, for $n\ge n_0$, $\ro(Y_n)$ is $(n,\delta)$-good with probability at least $1-\eta$ and the conclusion of \Cref{lem:RND} holds. If $\tilde A_{n} \de \are{Y_n}$ and $\tilde P_n\de\per{Y_n}$, recall that, conditionally on $(\tilde A_{n}, \tilde P_{n})$ and provided that $\tilde A_{n}> n/2$, the core $Y_{n}$ is distributed as a uniform pointed quadrangulation with a simple boundary. 
%
We denote by~$E_n$ the event where both $\big| \frac{\tilde A_{n}}{n} -1\big| \le \zeta$ and $\big| \frac{\tilde P_{n}}{p_n} -1\big| \le \zeta$. From \Cref{lem:RND} with $n'=\tilde A_{n}$ and $p'=\tilde P_{n}$, for any $n \ge n_{0}$ and any $(n,\delta)$-good $(n,\eps)$-restriction map~$\br$,
\begin{multline*}
\left|\P\big(\ro(X_n) = \br\big)-\P\Big(\ro(Y_n) = \br \bigm| \tilde A_{n},\, \tilde P_{n}\Big) \right| 
	\le \eta \,\P\Big(\ro(Y_n) = \br \bigm| \tilde A_{n},\, \tilde P_{n}\Big)\ind_{E_n}+\\
	\left(\P\big(\ro(X_n) = \br\big)+\P\Big(\ro(Y_n) = \br \bigm| \tilde A_{n},\, \tilde P_{n}\Big) \right)\ind_{\bar E_n} \,.
\end{multline*}
As a result,
\begin{equation}\label{dTVgood}
\sum_{ \br\ \text{$(n, \delta)$-good} } \left|\P\big(\ro(X_n) = \br\big)-\P\big(\ro(Y_n) = \br \big) \right|\le \eta+2\P(\bar E_n)\,.
\end{equation}

Increasing $n_0$ if necessary, from \Cref{cvAnPn}, the event~$E_n$ holds for any $n\ge n_0$ with probability at least $1-\eta/2$. In particular, \eqref{dTVgood}, together with the assumption that $\ro(Y_n)$ is $(n,\delta)$-good with probability at least $1-\eta$, yield that, for $n \ge n_{0}$,
\[
\P\big(\ro(X_n)\text{ is $(n,\delta)$-good}\big) \geq \P\big(\ro(Y_n)\text{ is $(n,\delta)$-good}\big) - 2\eta \geq 1- 3\eta\,,
\]
and, finally,
\[
\dTV\big(\ro(X_n),\ro(Y_n)  \big)\le \frac12 \left(2\eta+\P\big(\ro(X_n)\text{ is not $(n,\delta)$-good}\big)+\P\big(\ro(Y_n)\text{ is not $(n,\delta)$-good}\big)\right)\le 3\eta\,.
\]
As a result, 
\[
\liminf_{\eps\to 0}\lsup_{n \to \infty} \dTV\big(\ro(X_n),\ro(Y_n)\big)=0
\]
and we conclude thanks to~\eqref{decreas}, which implies that, for each~$n$, $\dTV\big(\ro(X_n),\ro(Y_n)\big)$ is nonincreasing with~$\eps$.
\end{proof}

\section{Estimates from the bijective construction}\label{secbij}

In this section, we use the classical bijective construction of~$Q_{n, 3p_n}^\bullet$ in order to prove the  rough estimates of \Cref{lem:good}, as well as \Cref{Psmall}. We start with deterministic observations.

\subsection{Bijective encoding by labeled treed bridges}\label{secdefbij}

Let us now recall the classical encoding of quadrangulations with a boundary; this is a particular case of the Bouttier--Di Francesco--Guitter bijection~\cite{BDG04}, which generalizes the famous Cori--Vauquelin--Schaeffer bijection~\cite{CVS81} between plane quadrangulations and so-called well-labeled trees. An encoding object, which we will call a \emph{labeled treed bridge}, consists in:
\begin{itemize}
	\item a rooted cycle $(\rho_0,\rho_1,\dots,\rho_p=\rho_0)$ of length~$p$ for some even $p\in 2\N$, labeled by an integer-valued function~$\lambda$ in such a way that $\lambda(\rho_0)=0$ and $|\lambda(\rho_{i+1})-\lambda(\rho_{i})|=1$ for $0\le i < p$\,;
	\item and, for each $i\in\{0,\dots,p-1\}$ such that $\lambda(\rho_{i+1})=\lambda(\rho_{i})-1$, a plane tree with root vertex~$\rho_i$ whose vertices are labeled by~$\lambda$ in such a way that the labels of any two neighboring vertices differ by~$-1$, $0$, or~$1$.
\end{itemize}
The labels $(\lambda(\rho_0),\lambda(\rho_1),\dots,\lambda(\rho_p))$ of the cycle form a path with $\pm 1$-steps going from~$0$ to~$0$, classically called a \emph{discrete bridge}, so that it has exactly~$p/2$ upsteps and~$p/2$ downsteps. Consequently, a labeled treed bridge built on a cycle of length~$p$ is a forest of~$p/2$ trees (some possibly reduced to the one-vertex tree), labeled by the function~$\lambda$. The \emph{number of edges} in a labeled treed bridge is the sum of the number of edges in its trees. 

Let $m\in \N$ and $p\in 2\N$. The following construction is a bijection between the set of labeled treed bridges with~$p/2$ trees and~$m$ edges and the set of pointed quadrangulations with a boundary having area~$m$ and perimeter~$p$; see Figure~\ref{figbij}. We consider a labeled treed bridges with~$p/2$ trees and~$m$ edges. We first embed, in counterclockwise order, the rooted cycle in the plane, connecting with edges its subsequent elements. We then embed the plane trees inside the cycle, without edge crossings. At this stage, we obtain a map with~$2$ faces, the bounded one of degree $2m+p$ and the unbounded one of degree~$p$.

\begin{figure}[ht!]
	\includegraphics[width=7cm]{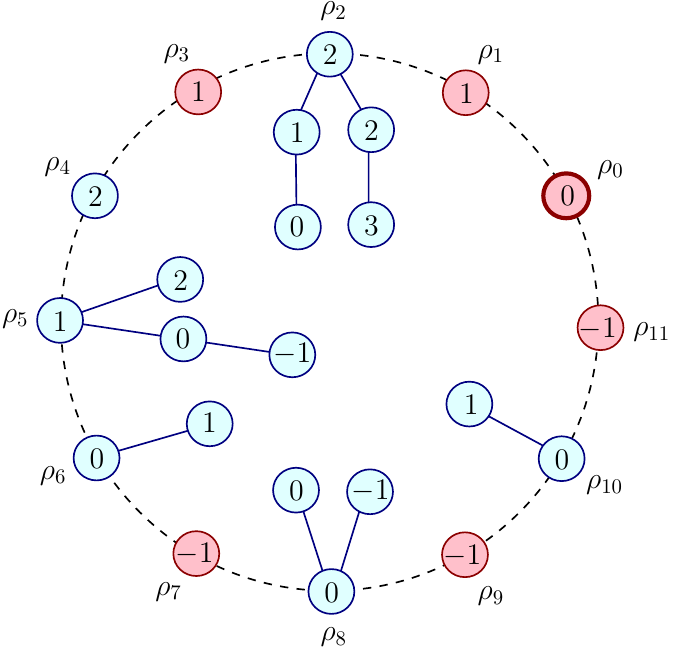}\hfill\includegraphics[width=7cm]{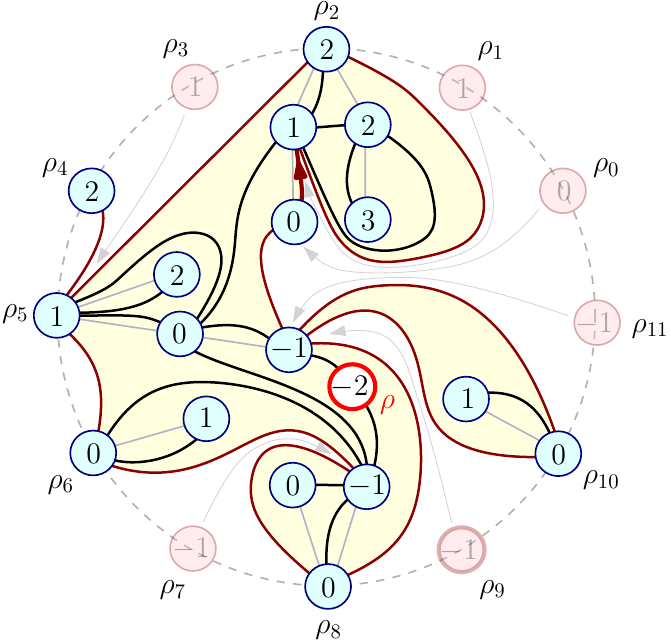}
	\caption{The bijection, from a labeled treed bridge to a pointed quadrangulation with a boundary. On this example, $p=12$, $m=11$, $\lambda_\star=-2$; the tree with root vertex~$\rho_4$ is a one-vertex tree. The edges of the cycle are dashed; its root~$\rho_0$ has a thicker outline. The red vertices precede upsteps in the discrete bridge; they are not vertices of the output map. The gray arrows highlight the correspondence between the cycle and the boundary.}
	\label{figbij}
\end{figure}

Let~$c_0$, $c_1$, \ldots, $c_{2m+p/2-1}$ be the sequence of corners of the bounded face, incident to one of the trees, in contour order, starting from an arbitrary corner. Beware that we ignore the $p/2$ corners incident to the vertices of the cycle that are not the root of a tree. We extend this list by periodicity, setting $c_{2m+p/2+i}=c_i$ for every $i\ge 0$, 
and adding one corner~$c_\infty$ incident to an extra vertex~$\rho$ added inside the bounded face. We extend the definition of~$\lambda$ to corners by letting the label of a corner be equal to the label of the incident vertex. We also set $\lambda_\star\de\min_{i\ge 0} \lambda(c_i) -1$ and $\lambda(c_\infty)=\lambda(\rho)\de\lambda_\star$. Note here again that the minimum is taken over the labels of the tree vertices; the vertices of the cycle without trees are not taken into account. We then define the \emph{successor} of a corner~$c_i$ as the corner $\suc(c_i)\de c_j$ where
\[
j\de\inf\{k>i:\lambda(c_k)=\lambda(c_i)-1\}\in \Z_+\cup\{\infty\}\,.
\]
For each $i\in\{0,\dots,2m+p/2-1\}$, we link by an arc the corner~$c_i$ with its successor, in a non-crossing fashion. We finally discard the original edges. The resulting embedded graph~$\bq$ is a quadrangulation with a boundary pointed at~$\rho$ and rooted as follows. First of all, observe that the original edges of the cycle are in one-to-one correspondence with the oriented edges that are incident to the external face of~$\bq$. Indeed, let us suppose that there is a tree at~$\rho_i$ and the next one is at~$\rho_{i+k}$ for some $k\ge 1$. We denote by~$c_s$ the last corner of the tree with root vertex~$\rho_i$, so that $c_{s+1}$ is the first corner of the tree with root vertex~$\rho_{i+k}$. Then the labels along the cycle in between those trees are $\lambda(\rho_i)$, $\lambda(\rho_i)-1$, $\lambda(\rho_i)$, $\lambda(\rho_i)+1$, $\lambda(\rho_i)+2$, \dots, $\lambda(\rho_i)-2+k=\lambda(\rho_{i+k})$ and the edges linking~$\rho_i$ to~$\rho_{i+k}$ in the cycle correspond to the sequence of~$k$ arcs $c_s\to \suc(c_s)=\suc^{k-1}(c_{s+1})\leftarrow \suc^{k-2}(c_{s+1})\leftarrow \dots \leftarrow \suc(c_{s+1}) \leftarrow c_{s+1}$. The root is then the oriented edge corresponding to the original edge linking~$\rho_{0}$ with~$\rho_1$. See Figure~\ref{figbij}.

In this construction, the edges of the labeled treed bridge are in one-to-one correspondence with the inner faces of the output map~$\bq$ and the vertices of the cycle are in one-to-one correspondence with the corners of the external face of~$\bq$. In the latter correspondence, the labels of corresponding elements are equal. Except from~$\rho$, all the vertices of~$\bq$ are vertices of the labeled treed bridge. Moreover, the labels on~$V(\bq)$ inherited from~$\lambda$ and the convention $\lambda(\rho)=\lambda_\star$ (which we still denote by~$\lambda$) are the relative distances to~$\rho$ in~$\bq$:
\begin{equation}\label{labeldist}
\dd_\bq(v,\rho)=\hat\lambda(v)\de\lambda(v)-\lambda_\star\, ,\qquad v\in V(\bq).
\end{equation}
In the following, the nonnegative integer~$\hat\lambda(v)$ will be called the \emph{shifted label} of~$v$.


\subsection{Reading off information about a restriction from the encoding object}

As in the previous section, we fix $m\in \N$, $p\in 2\N$ and consider a labeled treed bridge with~$p/2$ trees and~$m$ edges and the corresponding pointed quadrangulation with a boundary $\bq^\bullet=(\bq,\rho)$. Using the one-to-one correspondence between the cycle and the boundary of~$\bq$, we let~$\rho_0$, $\rho_1$, \dots, $\rho_{p-1}$ be the corners of the external face of~$\bq$, arranged in contour order, starting from the origin of the root of~$\bq$. For $j\in\{0,1,\dots, p-1\}$, we let~$T(j)$ be the smallest~$k$ such that the corner~$c_k$ is incident to the same vertex as~$\rho_j$. By convention, we also set $T(p)\de 2m+p/2$. 

Remember that the boundary of~$\bq$ is not necessarily simple and that we are interested in its core. We assume that $\core(\bq^\bullet)\neq\wp$ and set $\tilde p\de\per{\core(\bq^\bullet)}$. In contour order, starting from the origin of the root of~$\core(\bq^\bullet)$, we denote by~$\tilde v_0$, $\tilde v_1$, \dots, $\tilde v_{\tilde p-1}$ the vertices of the boundary of~$\core(\bq^\bullet)$. For $i\in\{0,1,\dots,\tilde p-1\}$, we let~$J(i)$ be the smallest~$j$ such that the corner~$\rho_j$ is incident to~$\tilde v_i$.

Beware that we are bound to use 3 timescales: that of the tree corners~$c_0$, \dots, $c_{2m+p/2-1}$, that of the boundary of~$\bq^\bullet$ (given by~$\rho_0$, \dots, $\rho_{p-1}$), and that of the boundary of~$\core(\bq^\bullet)$ (given by~$\tilde v_0$, \dots, $\tilde v_{\tilde p-1}$). We now fix $\eps >0$ and $n \ge 1$ and focus on~$\ro(\core(\bq^\bullet))$; we furthermore assume that~$\bq^\bullet$ is such that this restriction differs from~$\wp$. We use the notation from Section~\ref{secrestr}.

\subsubsection*{Shifted labels of the distinguished vertices}

For the quadrangulation $\core(\bq^\bullet)$, the interval~$I$ from Section~\ref{secrestr} is the set $\big\{\tilde v_i\,,\ \lfloor (1/3-\eps)\, p_{n}\rfloor\le i \le \lfloor p_{n}/3\rfloor\big\}$. From~\eqref{labeldist}, the shifted labels of these vertices are the distances to~$\rho$, so that the minimum~$h$ of these shifted labels is either~$r$ or~$r+1$, where the radius~$r$ is the smallest integer such that $B_{r}(\bq,\rho)$ intersects~$I$. Furthermore, the vertex~$v^-$ is a vertex in~$I$ with shifted label~$r$ or~$r+1$; it is thus a vertex of~$I$ whose shifted label is~$h$ or $h+1$. As the labels between neighboring vertices of the boundary differ by exactly~$1$, the vertex~$v^+$ is between the first boundary vertex after~$t_{1/3}$ with shifted label $h+1$ (included) and the first with shifted label $h-2$ (excluded). We do not need more precision than this; many of these points will become confounded in the scaling limit. For all $0\leq i \leq \tilde p-1$, all the vertices in the noncore component of~$\bq$ attached to~$\tilde v_i$ are farther away from~$\rho$ than~$\tilde v_i$, since any path from~$\rho$ to such a vertex must pass through~$\tilde v_i$. In particular, if a vertex~$\rho_k$ belongs to the noncore component of~$\bq$ attached to~$\tilde v_i$ for some~$i$ and~$k$, it holds that $\hat\lambda(\rho_k)\ge\hat\lambda(\tilde v_i)$. Since $I\subseteq \{\rho_k\,:\ J({\lfloor (1/3-\eps)\, p_{n}\rfloor})\le k \le J({\lfloor p_{n}/3\rfloor})\}$ and any vertex~$\rho_k$ of the latter set is either a~$\tilde v_i$ or belongs to the noncore component of~$\bq$ attached to a~$\tilde v_i$, for some $\lfloor (1/3-\eps)\, p_{n}\rfloor\le i \le \lfloor p_{n}/3\rfloor$, we have
\[
h=\min \big\{\hat\lambda(\rho_k)\,:\ J({\lfloor (1/3-\eps)\, p_{n}\rfloor})\le k \le J({\lfloor p_{n}/3\rfloor}) \big\}\,.
\]

\subsubsection*{Volume estimates}

We let~$i^-$ and~$i^+$ be the indices such that $v^-=\tilde v_{i^-}$ and $v^+=\tilde v_{i^+}$. Note that we thus have $\ple=i^--1$ and $\pri=\tilde p_n-i^+$. We then define the set~$\cS$ of vertices of the trees of the labeled treed bridge whose root vertices belong to $\{\rho_j:\; J({i^-})\le j < J({i^+})\}$ and refer the reader to Figure~\ref{vol}. 

\begin{figure}[ht!]
	\centering\includegraphics[width=13cm]{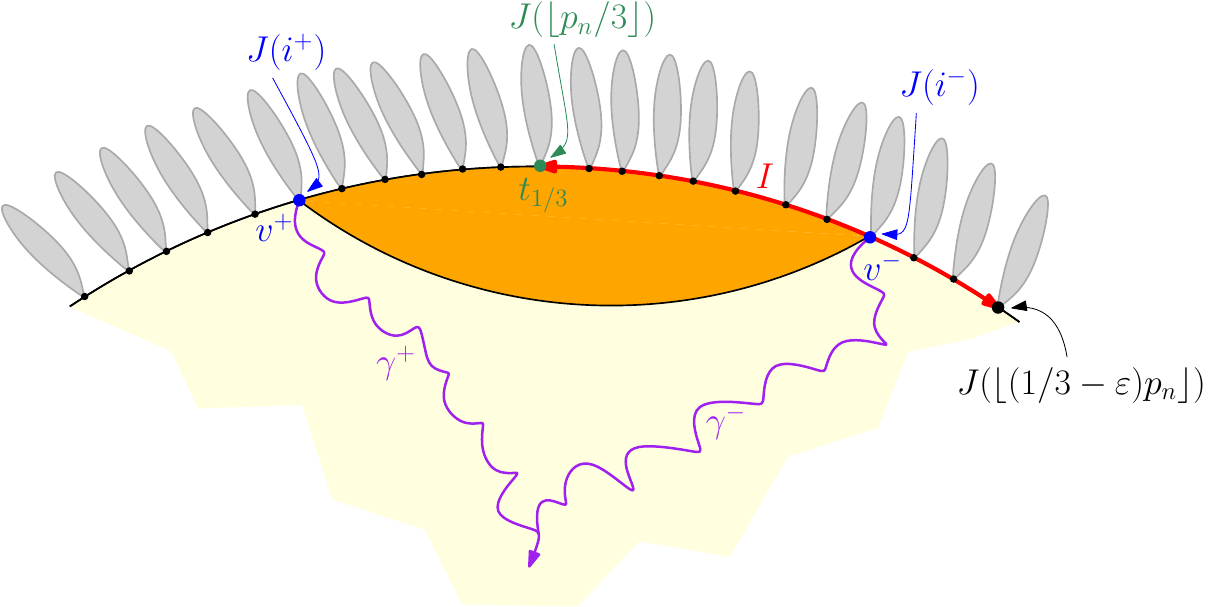}
	\caption{Some notation on the map~$\bq^\bullet$. The noncore components are grayed out. The geodesics~$\gamma^\pm$ link~$v^\pm$ to~$\rho$; they separate $\cro(\core(\bq^\bullet))$ from~$\rho$.}
	\label{vol}
\end{figure}

We claim that all the vertices of~$\cro(\core(\bq^\bullet))$ except at most two belong to~$\cS$. To see this, we let~$\gamma^\pm$ be the leftmost geodesic to~$\rho$ issued from~$c_{T\circ J(i^\pm)}$, that is, the path made of the edges linking~$c_{T\circ J(i^\pm)}$ to its iterate successors. These paths are geodesics thanks to~\eqref{labeldist}. As they start from boundary vertices, they separate $\core(\bq^\bullet)$ into two connected components.

The component that contains~$t_{1/3}$ actually includes $\cro(\core(\bq^\bullet))$. Indeed, first observe that the common boundary between $\ro(\core(\bq^\bullet))$ and $\cro(\core(\bq^\bullet))$ is made of vertices having shifted labels~$r$ or~$r+1$. Since the geodesic~$\gamma^\pm$ visits vertices with decreasing labels, it visits this common boundary at~$v^\pm$ and possibly after its first step only (in the case where $\hat\lambda(v^\pm)=r+1$). We see from the definition of the hull that, if this eventuality occurs, the first edge of~$\gamma^\pm$ is actually part of this common boundary.

Finally, the vertices of the component including $\cro(\core(\bq^\bullet))$ all belong to~$\cS$ or~$\gamma^+$ and, except from possibly the first two, the vertices of~$\gamma^+$ do not belong to~$\cro(\core(\bq^\bullet))$. The claim follows.

\medskip
\begin{figure}[ht!]
	\centering\includegraphics[width=15cm]{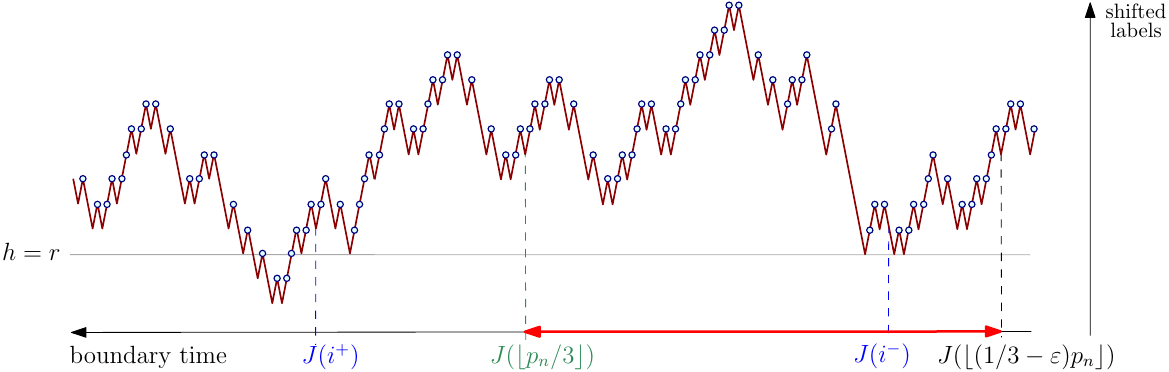}
	\caption{Shifted labels along the boundary. As in Figure~\ref{vol}, the time is that of the boundary of~$\bq^\bullet$ and goes from right to left; and the red double-headed arrow corresponds to the set~$I$. The dark red path represents the shifted labels along the boundary of~$\bq^\bullet$. Before each of its downsteps is a labeled tree; only its root is depicted, with a blue dot as in Figure~\ref{figbij}. On the part of map corresponding to $\big[J(\lfloor (1/3-\eps)\, p_{n}\rfloor), J(\lfloor p_{n}/3\rfloor)\big]$ in the boundary scale, the minimal shifted label along the boundary is~$h$. On this example, $h=r$ and $\hat\lambda(v^-)=\hat\lambda(v^+)=r+1$.}
	\label{bijest1}
\end{figure}

A quadrangulation with area~$n'$ and perimeter~$p'\ge 2$ has $n'+p'/2+1$ vertices, so at least two more vertices than faces; we thus obtain
\[
\are{\cro(\core(\bq^\bullet))}\le\# V(\cro(\core(\bq^\bullet)))-2 \le \#\cS\,.
\]
This upper bound on $\are{\cro(\core(\bq^\bullet))}$ yields a lower bound on $\are{\ro(\core(\bq^\bullet))}$. In order to obtain an upper bound on $\are{\ro(\core(\bq^\bullet))}$, we refer to Figures~\ref{bijest1} and~\ref{bijest2}, and we set 
\[
\cS^{\bge} \de \Big\{ v \in \cS\, :\, \min_{w \in \llbracket \Rt(v),v\rrbracket} \hat{\lambda}(w) \ge r+3 \Big\}\,,
\]
where $\Rt(v)$ denotes the root vertex of the tree\footnote{Recall that the vertices of~$\bq$ different from~$\rho$ are identified with vertices of the encoding labeled treed bridge.} that contains~$v$ and $\llbracket \Rt(v),v\rrbracket$ is the set of vertices on the unique path from~$\Rt(v)$ to~$v$ in the tree (extremities included). We claim that $\cS^{\bge}$ does not intersect~$\ro(\core(\bq^\bullet))$. First, observe from the definitions of~$\core(\bq^\bullet)$ and of~$\ro(\core(\bq^\bullet))$ that the root vertex of a tree in~$\cS$ with shifted label greater than or equal to~$r+3$ does not belong to~$\ro(\core(\bq^\bullet))$, since its label prevents it from being~$v^\pm$. Next, observe that two neighboring vertices in a tree are either linked by an edge of the map if their labels differ or by a path of length two in the map when they have same label. Consequently, any vertex~$v$ can be linked by edges of the map to~$\Rt(v)$ in such a way that the shifted labels on the linking path are all larger than or equal to $\min_{\llbracket \Rt(v),v\rrbracket} \hat\lambda -1$. Such a linking path cannot cross the common boundary between $\ro(\core(\bq^\bullet))$ and $\cro(\core(\bq^\bullet))$ when $\min_{\llbracket \Rt(v),v\rrbracket} \hat \lambda \ge r+3$. The claim follows.

\begin{figure}[ht!]
	\centering\includegraphics[width=15cm]{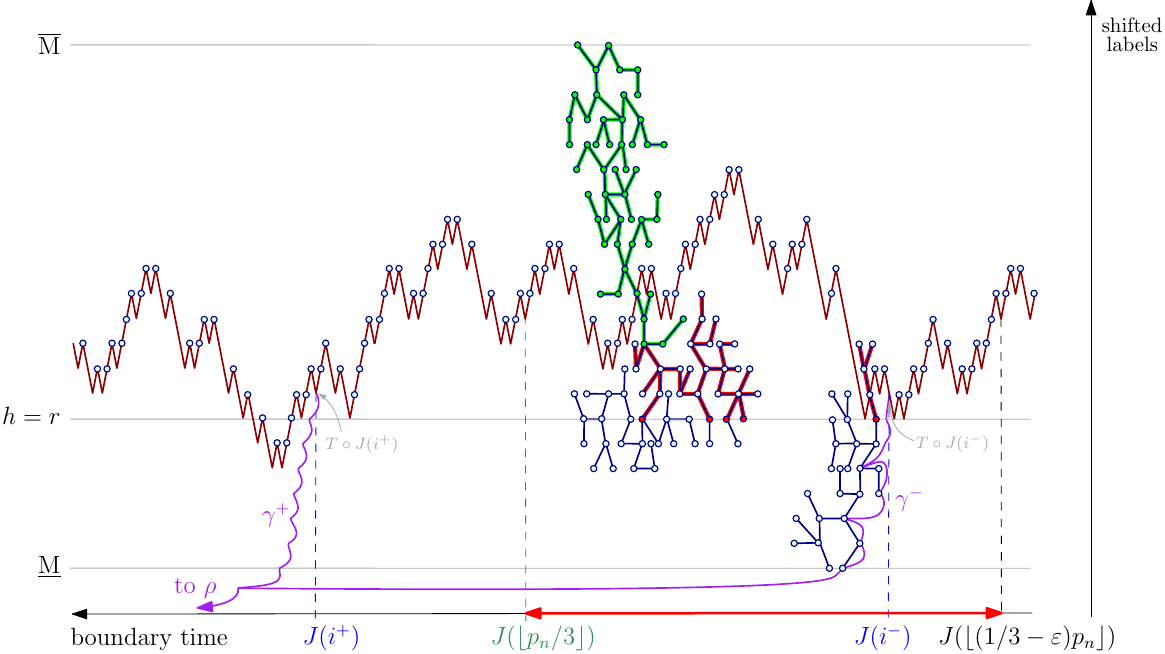}
	\caption{Building on Figure~\ref{bijest1}, we now represented the two trees of the labeled treed cycle that are of most interest: they are properly embedded, with vertices at heights corresponding to their shifted labels. The tree near the middle contains the vertices attaining the maximum~$\overline{\mathrm M}$ over~$\cS$ of the shifted labels and the one to the right contains the vertices attaining the minimum~$\underline{\mathrm M}$ among~$\cS$ of the shifted labels. The geodesic~$\gamma^-$ starts in undisclosed trees until it reaches after three steps the tree with minimal shifted label. The geodesic~$\gamma^+$ entirely lies in undisclosed trees. Highlighted in green are the vertices of~$\cS^{\bge}$ and the edges on their ancestral lines. Highlighted in red are the vertices of~$\cS^{\beq}$ and the edges linking vertices whose ancestral lines are labeled~$r$ or more and that are not highlighted in green.}
	\label{bijest2}
\end{figure}

Combining the above bounds, we have the following estimate for the volume of~$\ro(\core(\bq^\bullet))$:
\begin{equation}\label{estvol}
\are{\core(\bq^\bullet)}-\#\cS \le\are{\ro(\core(\bq^\bullet))}\le\# V(\ro(\core(\bq^\bullet)))\le m+\frac{p}{2}+1-\#\cS^{\bge}\,.
\end{equation}

\subsubsection*{Inner perimeter.}

We will need an upper bound on~$\pin$. Let us introduce
\[
\cS^{\beq} \de \Big\{ v \in \cS\, :\, \hat{\lambda}(v)=r \text{ and } \min_{w\in\llbracket \Rt(v),v\rrbracket\backslash \{v\}} \hat{\lambda}(w) \ge r+1 \Big\}\,.
\]
In words, it is the set of first vertices with label~$r$ when exploring the trees of~$\cS$ from their roots; see Figure~\ref{bijest2}. We claim that this set contains all the vertices with label~$r$ belonging to the common boundary between $\ro(\core(\bq^\bullet))$ and $\cro(\core(\bq^\bullet))$, with the exception of at most one point. Recall that the vertices of $\cro(\core(\bq^\bullet))$ all belong to~$\cS\cup\gamma^+$\,; in particular, the vertices with label~$r$ belonging to the common boundary all lie in~$\cS$ except possibly one (which belongs to~$\gamma^+$), since only one vertex visited by~$\gamma^+$ may have label~$r$. We then follow an argument from \cite[Proposition~18]{CuMeMi13}. Let $v\in \cS\backslash\cS^{\beq}$ be such that~$\hat\lambda(v)=r$. Since $\hat\lambda(\Rt(v))\ge r$ and the successive labels along $\llbracket \Rt(v),v\rrbracket$ differ by at most~$1$, we can find a vertex~$w\in\llbracket \Rt(v),v\rrbracket\backslash \{v\}$ with shifted label~$r$. Considering the geodesics made of the iterate successors of two corners incident to~$w$, one before and one after~$v$ in contour order, we obtain a cycle that separates~$v$ from~$\Rt(v)$. As $\Rt(v)\notin\ro(\core(\bq^\bullet))$ and all the shifted labels along this cycle are smaller than or equal to~$r$, we see that~$v$ cannot be on the common boundary between $\ro(\core(\bq^\bullet))$ and $\cro(\core(\bq^\bullet))$. Adding to this the fact that the shifted labels along the common boundary alternate between~$r$ and~$r+1$, we obtain
\begin{equation}\label{estpin}
\pin \le 2\,(1+ \#\cS^{\beq}) + 1\,.
\end{equation}

\subsubsection*{Distance to the restriction}

Finally, we need a bound on the Gromov--Hausdorff distance\footnote{We cannot use the Hausdorff distance between these sets in the natural embedding of~$\ro(\core(\bq^\bullet))$ within~$\core(\bq^\bullet)$ because this embedding is not isometric: the metric of~$\ro(\core(\bq^\bullet))$ is not the restriction of the metric of~$\core(\bq^\bullet)$. Indeed, between two points of the common boundary, there might exists paths within $\cro(\core(\bq^\bullet))$ that are shorter than a geodesic in~$\ro(\core(\bq^\bullet))$, thus providing ``shortcuts'' in~$\core(\bq^\bullet)$.} between $\core(\bq^\bullet)$ and $\ro(\core(\bq^\bullet))$. Setting
\[
\underline{\mathrm M}\de \min\big\{ \hat\lambda(v)\,:\, v\in \cS\big\}\qquad\text{ and }\qquad
\overline{\mathrm M}\de \max\big\{ \hat\lambda(v)\,:\, v\in \cS\big\}\,,
\]
we claim that
\begin{equation}\label{eq:wena}
\dGH\big(\core(\bq^\bullet),\ro(\core(\bq^\bullet))\big) \le \overline{\mathrm M} - \underline{\mathrm M} + 1.
\end{equation}
We let~$\underline v$ be the merging vertex of~$\gamma^-$ with~$\gamma^+$; it has shifted label $\hat\lambda(\underline v)= \underline{\mathrm M}-1$ and lies in~$\ro(\core(\bq^\bullet))$. Each vertex $v\in \cS$ can then be linked to~$\underline v$ by following the edges linking the iterative successors of any corner incident to~$v$; this results in a path of length smaller than or equal to $\overline{\mathrm M} - \underline{\mathrm M} + 1$. This easily implies that the distortion of the correspondence (see \Cref{apdGH})
\[
\big\{ (v,v)\,:\, v\in \ro(\core(\bq^\bullet))\big\}\cup\big\{ (v,\underline v)\,:\, v\in \cro(\core(\bq^\bullet))\big\}
\]
between $\core(\bq^\bullet)$ and $\ro(\core(\bq^\bullet))$ is less than $2(\overline{\mathrm M} - \underline{\mathrm M} + 1)$. The claim follows.

\subsection{Scaling limits and proofs}

We are interested in the label processes: we set, for $s\in[0,1]$,
\begin{equation}\label{bln}
B(s)\de\lambda\big(\rho_{\lfloor ps\rfloor}\big)\qquad\text{ and }\qquad
L(s)\de\lambda\big(c_{\lfloor (2m+p/2-1)s\rfloor}\big)\,.
\end{equation}
We will also need the so-called \emph{contour process}, defined as follows. For $s\in[0,1]$, if the vertex~$v$ incident to the corner $c_{\lfloor (2m+p/2-1)s\rfloor}$ belongs to the $k$-th tree~$\tt$ of the labeled treed bridge, then
\begin{equation}\label{Cn}
C(s)\de \dd_\tt\big(v,\Rt(v)\big)-k+p/2+1\,.
\end{equation}

We now set $m=n$, $p=3p_n$ and apply our observations to a random quadrangulation $\bq^\bullet=Q^{\bullet}_{n,3p_n}$. To keep track of this, we add a subscript~$n$ in the notation and possibly an~$\eps$ when the quantity depends on the restriction~$\ro$ (as~$i_{n,\eps}^-$ or~$\overline{\mathrm M}_{n,\eps}$ for instance). As the encoding of Section~\ref{secdefbij} is bijective, the labeled treed bridge corresponding to~$Q^{\bullet}_{n,3p_n}$ is uniformly distributed over those with~$3p_n/2$ trees and~$n$ edges. We will need the scaling limit of the random processes of~\eqref{bln} and~\eqref{Cn}, as well as of~$J$ and~$T$, which we now denote by~$B_n$, $L_n$, $C_n$, $J_n$ and~$T_n$ in this probabilistic context. By \cite[Propostion~7 \& Corollary~8]{BetQ}, the following joint convergence holds in distribution, for the uniform topology\footnote{In \cite{BetQ}, the topology considered needs to take into account processes defined on intervals with varying length. It specifies to the uniform topology when working on the fixed interval $[0,1]$.} on the space of bounded functions on~$[0,1]$,
\begin{equation}\label{slbln}
\left(a_n B_n(s),\frac{C_n(s)}{\sqrt{2n}},\frac{T_n\big(\lfloor 3p_{n} s\rfloor\big)}{2n},a_n L_n(s)\right)_{0\le s \le 1}
	\xrightarrow[n\to\infty]{(d)} \big(\fB_s,\fF_s,\cT(s),\fL_s\big)_{0\le s \le 1}\,,
\end{equation}
where~$\fB$ is $3\sqrt{2\alpha}$ times a Brownian bridge on $[0,1]$, $\fF$ is a first-passage Brownian bridge on $[0,1]$ from~$\alpha$ to~$0$, independent of~$\fB$, $\cT$ is the hitting time process\footnote{Recall that this means that $\cT(t) = \inf \{s \ge 0 \,:\, \fF_s=\alpha-t\}$, for $t\in [0,\alpha]$.} associated with~$\fF$, and~$\fL$ is the head of a Brownian snake process built upon~$\fB$ and~$\fF$; we refer to~\cite{BetQ} for the details.

From \cite[Proposition~2.6 \& Lemma~2.7]{GM19cv}, on the event of asymptotically full probability where the core is well defined (\Cref{cvAnPn}), the (simple) boundary of~$\core\big(Q_{n, 3p_n}^{\bullet}\big)$ is ``uniformly spread'' among that of~$Q_{n, 3p_n}^{\bullet}$ in the sense that
\begin{equation}\label{eq:1/3}
\left(\frac{J_n\big({\tilde P_n\wedge \lfloor p_{n} s\rfloor}\big)}{3p_{n}} \right)_{0\le s \le 1} \xrightarrow[n\to\infty]{(d)} \big(s\big)_{0\le s \le 1}\,,
\end{equation}
where $\tilde P_n=\per{\core\big(Q_{n, 3p_n}^{\bullet}\big)}$ as before. 

\begin{proof}[Proof of \Cref{lem:good}]
We fix $\eta >0$. By \Cref{cvAnPn}, the event $\{\tilde P_n\ge p_n/2\}$ holds asymptotically in~$n$ with probability at least $1- \eta/8$. We work on the latter event; in particular, $\core(Q^{\bullet}_{n,3p_{n}}) \neq \wp$. Since the minimum of~$\fB$ over $[1/3- \eps, 1/3]$ is almost surely unique and attained within the open interval $(1/3- \eps, 1/3)$, it follows from~\eqref{slbln} and~\eqref{eq:1/3} that, for~$\eps$ small enough, the event $\{\ro(\core(Q^{\bullet}_{n,3p_{n}})) \neq \wp\}$ holds asymptotically in~$n$ with probability at least $1- \eta/4$ and, on the latter event, \eqref{eq:1/3} holds together with
\begin{equation}\label{Jpm}
\left(\frac{i^{-}_{n, \eps}}{p_{n}}, \frac{i^{+}_{n, \eps}}{p_{n}}
\right) \xrightarrow[n\to\infty]{(d)} \big( \cJ^{-}_{ \eps} , \cJ^{+}_{ \eps} \big)
	\de\Big(\argmin (\fB_{s})_{1/3 - \eps\le s \le 1/3}, \min\big\{ s \ge 1/3 : \fB_{s} = \fB_{ \cJ^{-}_{ \eps}}\big\}\Big).
\end{equation}

From this, we see that we may furthermore find~$\delta\in(0,\eps)$ so that the event where $\ro\big(\core\big(Q_{n, 3p_n}^{\bullet}\big)\big)$ satisfies both~\eqref{eq:perim1} and~\eqref{eq:perim2} holds asymptotically in~$n$ with probability at least $1- 3\eta/8$.

Recall that we now denote by~$\cS_{n,\varepsilon}$ the set previously denoted by~$\cS$, in order to highlight the dependance in~$n$ and $\varepsilon>0$. From~\eqref{slbln}, for every $\eps>0$, the random variable $\#\cS_{n,\eps}/n$ admits a limit in distribution~$\cS_{\eps}$, distributed as $\fF_{\cT(\cJ^{+}_{ \eps})}-\fF_{\cT(\cJ^{-}_{ \eps})}$. From standard properties of Brownian motion, there exists $\tilde \eps>0$ such that, for any $\eps\in(0,\tilde \eps)$, it holds that $\P(\cS_{\eps}\leq 1/3)\geq 1-\eta/8$. Now, taking any $\eps\in(0,\tilde \eps)$, we claim that $\#\cS^{\bge}_{n,\eps}/n$ admits a limit in distribution without atom at~$0$. Taking this claim for granted for a minute and adding \Cref{cvAnPn}, the volume estimate~\eqref{estvol} yields that, up to lowering~$\delta$, the event where $\ro\big(\core\big(Q_{n, 3p_n}^{\bullet}\big)\big)$ satisfies~\eqref{eq:perim1}, \eqref{eq:perim2} and~\eqref{eq:volume} holds asymptotically in~$n$ with probability at least $1- \eta/2$. The latter claim is obtained as follows. First, for each $s\in[0,1]$, we can define a trajectory~$W(s)$ as recording the labels along the ancestral lineage $\llbracket \Rt(v),v\rrbracket$, where we denoted by~$v$ the vertex incident to the corner $c_{\lfloor (2m+p/2-1)s\rfloor}$, as above. The trajectory-valued process~$(W(s))_{0\le s \le 1}$ is the so-called \emph{snake}; in passing, observe that the final value of~$W(s)$ is~$L(s)$, hence the name \emph{head of the snake}. By \cite[Proposition~15]{bettinelli10slr}, the process~$a_n W_n$ actually converges jointly with~\eqref{slbln} toward the so-called \emph{Brownian snake} $(\fW(s))_{0\le s \le 1}$ driven by the process~$\fF$ minus its past infimum, with initial values given by~$\fB$. Then $\#\cS^{\bge}_{n,\eps}/n$ converges in distribution towards
\begin{equation}\label{Sngelim}
\int_{\cJ^{-}_{ \eps}}^{\cJ^{+}_{ \eps}} \ind_{\{\min \fW(s) \ge \min \fB\}} \dd s\,.
\end{equation}
Now, for each $t$ such that $\fF(t)=\min_{0\le s\le t} \fF(s)$, the trajectory $\fW(t)$ is actually the point trajectory $0\mapsto \fL_t$. Since~$\fW$ is a continuous process and~$\fL$ is a.s.\ not identically equal to~$\min \fB$ on $(\cJ^{-}_{\eps}, \cJ^{+}_{\eps})$,  the above integral is almost surely positive.


\medskip	
Finally, for the remaining condition~\eqref{eq:perim3} on the inner perimeter, we see from the estimate~\eqref{estpin} that it is sufficient to prove that $\#\cS_{n, \eps}^{\beq}$ is not large in the scale~$\sqrt{n}$. More precisely, in order to conclude that we can choose~$\delta>0$ small enough so that $\ro\big(\core\big(Q_{n, 3p_n}^{\bullet}\big)\big)$ is $(n,\delta)$-good asymptotically in~$n$ with probability at least $1- \eta$, it is sufficient to show that there exists~$c$ such that $\limsup_n \P(\#\cS_{n, \eps}^\beq\ge c \sqrt{n})\le \eta/2$. This does not follow from the scaling limit results of~\cite{BetQ}; we need to elaborate a bit more.

Recall that, for~$\eps$ small enough, the event $\{\ro(\core(Q^{\bullet}_{n,3p_{n}})) \neq \wp\}$ holds asymptotically in~$n$ with probability at least $1- \eta/4$ and, on the latter event, both~\eqref{eq:1/3} and~\eqref{Jpm} hold. Then the limiting distribution of~$\cJ^{+}_{ \eps}$ ensures that, for $\eps>0$ small enough and~$n$ sufficiently large, the event~$E_n^\eps$ where $\ro(\core(Q^{\bullet}_{n,3p_{n}})) \neq \wp$ and $J_n(i_{n, \eps}^+)\le 2p_n$ occurs with probability at least $1-3\eta/8$. 

From now on, we work on the event~$E_n^\eps$ and restrict our attention to the trees on the bridge between~$0$ and~$2p_n$ (which contain~$\cS_{n, \eps}^{\beq}$). First of all, at the price of a constant, we forget the conditioning on the labeled treed bridge. More precisely, we consider that $(\rho_0,\rho_1,\rho_2,\dots)$ is an infinite sequence labeled by a simple random walk $\lambda(\rho_0)=0$, $\lambda(\rho_1)$, $\lambda(\rho_2)$, \dots, and carrying i.i.d.\ critical Geometric Galton--Watson trees with label differences along edges i.i.d.\ uniformly in $\{-1,0,1\}$ after descending steps. The labeled treed bridge we consider is thus distributed as the $3p_n$ first steps of the later process, conditioned on $\lambda(\rho_{3p_n})=0$ and on having~$n$ edges in the trees. We denote by~$\fS_{n, \eps}^{\beq}$ the set constructed as~$\cS_{n, \eps}^{\beq}$ but with the unconditioned process instead of the labeled treed bridge. Focusing merely on the~$2p_n$ first steps (as we work on~$E_n^\eps$), the Radon-Nikodym derivative of our model with respect to the unconditioned one is explicit (\cite[Lemma~36]{BeMi22}, applied with $a=n$, $k=2p_n$, $l=3p_n$, $\delta=0$) and uniformly bounded by some constant~$C$ (although its inverse is unbounded). This follows by an application of the local limit theorem (\cite[Lemma~37]{BeMi22}) and the fact that the limit of the Radon-Nikodym derivative is bounded (its expression is given in \cite[Equation~(31)]{BeMi22} where $L=3\alpha$ and $L'=2\alpha$). Summarizing, it holds that
\[
\E\big[\#\cS_{n, \eps}^{\beq};\,E_n^\eps\big]\le C\, \E\big[\#\fS_{n, \eps}^{\beq};\,E_n^\eps\big]
\]
Now, for any $ \ell \ge r$, the expected number of first vertices with label~$r$ when exploring from the root such a Galton--Watson tree with root label~$\ell$ is equal to $1$. Indeed, the generating function~$f_{\ell,r}$ for this number is given in~\cite[Equation~(22)]{CM12}: for $x \in [0,1]$
\[
f_{\ell,r}(x) =1 - \frac{2}{\big(\ell - r + a(x)\big) \big(\ell - r + 1 +a(x)\big)}\,,\qquad\text{ where }\quad a(x) = \frac{-1 + \sqrt{1 +8 \left(1 - x \right)^{-1}}}{2}\,,
\]
so that $f_{\ell,r}'(1)=1$. (\textit{To see that this expected number is smaller than or equal to~$1$, one can alternatively consider the first vertices with label~$\ell-1$, then the first vertices with label~$\ell-2$, etc. This makes up a new Galton--Watson tree, whose vertex-set is therefore a subset of the vertex-set of a critical Galton-Watson tree; hence it cannot be supercritical.}) From this, by first conditioning on the discrete bridge, we obtain that $\E\big[\#\fS_{n, \eps}^{\beq};\,E_n^\eps\big]\le 2 p_n$. We conclude by Markov's inequality that 
$\P(\#\cS_{n, \eps}^\beq\ge c \sqrt{n}\,;\,E_n^\eps)\le ({2C p_n})/({c\sqrt n})$, which is asymptotically smaller than~$\eta/8$ for~$c$ large enough.
\end{proof}

\begin{proof}[Proof of \Cref{Psmall}.\ref{PsmallY}]
Recall that $Y_n=\core(Q^{\bullet}_{n,3p_n})$. On the event $\{\ro(Y_n)\neq\wp\}$, we obtain from~\eqref{eq:wena} that
\[
\dGH\big(a_n Y_n, a_n\ro(Y_n)\big)\le a_n \big(\overline{\mathrm M}_{n,\eps} - \underline{\mathrm M}_{n,\eps} + 1\big)\,.
\]
As~$\underline{\mathrm M}_{n,\eps}$ and~$\overline{\mathrm M}_{n,\eps}$ are respectively the minimum and maximum of
\[
\big\{\hat\lambda_n(c_k)\,:\,T_n\circ J_n(i^-_{n,\eps})\le k < T_n\circ J_n(i^+_{n,\eps}) \big\}\,,
\]
by~\eqref{slbln} and~\eqref{eq:1/3}, for~$\eta>0$ fixed and~$\eps$ small enough, the event $\{\ro(Y_n) \neq \wp\}$ holds asymptotically in~$n$ with probability at least $1- \eta$ and, on the latter event, \eqref{Jpm} holds jointly with
\[
a_n \big(\overline{\mathrm M}_{n,\eps} - \underline{\mathrm M}_{n,\eps} + 1\big)\xrightarrow[n\to\infty]{(d)}
	\max_{[\cT({\cJ^{-}_{ \eps}}),\cT({\cJ^{+}_{ \eps}})]} \fL - \min_{[\cT({\cJ^{-}_{ \eps}}),\cT({\cJ^{+}_{ \eps}})]} \fL\,, 
\]
and the latter tends to~$0$ in probability as $ \eps \to 0$. The result follows.
\end{proof}

\subsection{Resampling argument}\label{sec:resampling}

It remains to prove \Cref{Psmall}.\ref{PsmallX}. First of all, note that, for a pointed quadrangulation with a boundary~$\bq^\bullet$, we can use the bijective encoding for~$\bq^\bullet$, for $\core(\bq^\bullet)$ and for $\ro(\core(\bq^\bullet))$. The parts in common of the maps correspond through the encoding bijection to parts in common of the encoding objects. In particular, the labeled treed bridge encoding a map obtained from another by removing some faces can be obtained from the original labeled treed bridge by removing some edges.

We aim at showing that~$X_n$ is close to~$\ro(X_n)$, already knowing that~$Y_n$ is close to~$\ro(Y_n)$ (\Cref{Psmall}.\ref{PsmallY}) and that, after taking a restriction, $X_n$ is close to~$Y_n$ (\Cref{Pclose}). The idea is to apply \Cref{Pclose} with another restriction operation that removes a small part far away from $t_{1/3}$, so that it does not interfere with the local surgery around~$t_{1/3}$. More precisely, we define a second notion of restriction $\rt$ and complement $\crt$ exactly as in Section~\ref{secrestr} except that we reverse the numbering of the boundary vertices, that is, we apply the mapping $0\mapsto 0$ and $i\in\{1,\dots,p-1\}\mapsto p-i$ to the original numbering. See Figure~\ref{resampling}.

\begin{figure}[ht!]
	\centering\includegraphics[width=14cm]{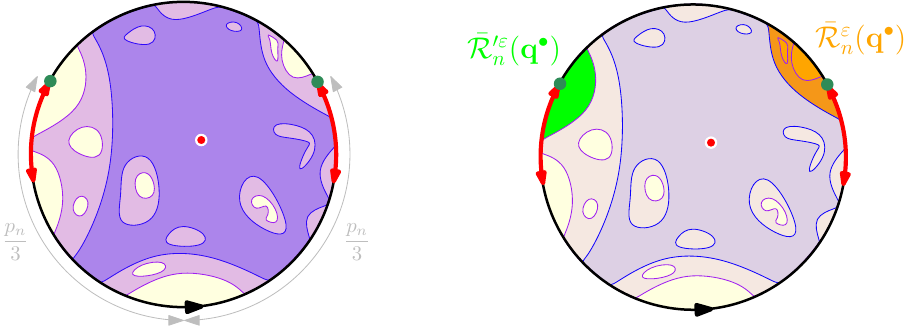}
	\caption{Definition of a second notion of restriction~$\rt$, going ``backwards'' along the boundary. The ball used in the definition of~$\ro(\bq^\bullet)$ is depicted in purple with a blue outline. The (larger) ball involved in the definition of~$\rt(\bq^\bullet)$ is in mauve. The difference between the map~$\bq^\bullet$ and its restriction~$\ro(\bq^\bullet)$ is the same as the difference between~$\rt(\bq^\bullet)$ and~$\ro(\rt(\bq^\bullet))$, provided the complements~$\cro(\bq^\bullet)$ and~$\crt(\bq^\bullet)$ are disjoint.}
	\label{resampling}
\end{figure}

Applying for instance a simple reflection, \Cref{Pclose}, which we have proved by now, also holds for this second notion of restriction: for all $\eps>0$,
\begin{equation}\label{PcloseRt}
\lim_{n \to \infty} \dTV\big(\rt(X_n),\rt(Y_n)  \big)=0\,.
\end{equation}

\begin{proof}[Proof of \Cref{Psmall}.\ref{PsmallX}]
We use~\eqref{eq:wena} as before and highlight the dependence in~$\bq^\bullet$, $n$ and~$\eps$ by denoting the right-hand side bound by~$\ouM(\bq^\bullet,n,\eps)$. Let us start with deterministic considerations and recall how this bound is defined. First, the restriction $\ro(\core(\bq^\bullet))$ defines on the boundary of~$\core(\bq^\bullet)$ and thus on the boundary of~$\bq^\bullet$ the two vertices~$v^+$ and~$v^-$. Then, the part of the boundary of~$\bq^\bullet$ between~$v^+$ and~$v^-$ contains the roots of some trees of the encoding labeled treed bridge of~$\bq^\bullet$. The bound~$\ouM(\bq^\bullet,n,\eps)$ is finally equal to~$1$ plus the difference between the maximal and the minimal label of the vertices that belong to those trees.

If~$\bq^\bullet$ has a simple boundary, then $\bq^\bullet=\core(\bq^\bullet)$, and thus $\dGH\big(\bq^\bullet, \ro(\bq^\bullet)\big) \le \ouM(\bq^\bullet,n,\eps)$. Furthermore, if $\cro(\bq^\bullet)$ and~$\crt(\bq^\bullet)$ are disjoint, then the trees involved in the definitions of $\ouM(\rt(\bq^\bullet),n,\eps)$ and of $\ouM(\bq^\bullet,n,\eps)$ are the same, so that $\ouM(\rt(\bq^\bullet),n,\eps)=\ouM(\bq^\bullet,n,\eps)$. Finally, the vertices considered in the definition of $\ouM(\core(\bq^\bullet),n,\eps)$ form a subset of the vertices involved in the definition of $\ouM(\bq^\bullet,n,\eps)$, so that $\ouM(\core(\bq^\bullet),n,\eps)\le\ouM(\bq^\bullet,n,\eps)$.

We turn to random maps. Since $X_n$ has a simple boundary, we have $\dGH\big(X_n, \ro(X_n)\big) \le \ouM(X_n,n,\eps)$. Now, on the event where $\rt(X_n)=\rt(Y_n)$ and the complements~$\cro(Y_n)$ and~$\crt(Y_n)$ are disjoint,
\[
\ouM(X_n,n,\eps)=\ouM\big(\rt(X_n),n,\eps\big)=\ouM\big(\rt(Y_n),n,\eps\big)=\ouM(Y_n,n,\eps)\le\ouM\big(Q_{n, 3p_n}^{\bullet},n,\eps\big)\,,
\]
the first equality coming from the fact that, on this event, it also holds that~$\cro(X_n)$ and~$\crt(X_n)$ are disjoint. We already showed in the proof of \Cref{Psmall}.\ref{PsmallY} that the latter bound converges, after scaling by~$a_n$, as $n\to\infty$ to a random variable that tends to~$0$ as $\eps\to 0$.

It thus remains to show that the latter event holds asymptotically with probability arbitrarily close to~$1$ for small~$\eps$. Let $\eta>0$. Reasoning as in the proof of \Cref{lem:good}, one can choose~$\eps$ small enough so that~$\cro(Y_n)$ and~$\crt(Y_n)$ are well defined and disjoint with probability at least $1-\eta/2$, asymptotically in~$n$. For such an~$\eps$, by~\eqref{PcloseRt}, for~$n$ large enough, it holds that $\dTV\big(\rt(X_n),\rt(Y_n) \big)<\eta/2$. We conclude thanks to the maximal coupling theorem.
\end{proof}

\appendix
\section{Gromov--Hausdorff topology}\label{apdGH}

Recall that the \emph{Hausdorff distance} between two closed subsets of a metric space $(\cZ, \dd_\cZ)$ is defined as $\dd_{\mathcal H}(A,B) \de \inf \{ \eps>0\,:\, A\subseteq B^\eps \text{ and }B\subseteq A^\eps \}$, where $C^\eps \de \{ x\in \cZ\, :\, \dd_\cZ(x,C)<\eps \}$ denotes the $\eps$-neighborhood of~$C$. The \emph{Gromov--Hausdorff distance} between two compact metric spaces $(\cX,\dd_\cX)$ and $(\cY,\dd_\cY)$ is then defined by
\[
\dGH\big((\cX,\dd_\cX),(\cY,\dd_\cY)\big) \de \inf \Big\{ \dd_{\mathcal H}\big(\varphi(\cX),\psi(\cY)\big)\Big\},
\]
where the infimum is taken over all isometric embeddings $\varphi : \cX \to \cZ$ and $\psi:\cY\to \cZ$ of~$\cX$ and~$\cY$ into the same metric space $(\cZ, \dd_\cZ)$. This defines a metric on the set of isometry classes of compact metric spaces (\cite[Theorem~7.3.30]{burago01cmg}), making it a Polish space\footnote{This is a simple consequence of Gromov's compactness theorem \cite[Theorem~7.4.15]{burago01cmg}.}.

The Gromov--Hausdorff distance may be expressed in terms of correspondences. A \emph{correspondence} between two metric spaces $(\cX,\dd_\cX)$ and $(\cY,\dd_\cY)$ is a subset $\RR\subseteq \cX \times \cY$ such that, for all $x\in \cX$, there is at least one $y\in \cY$ for which $(x,y)\in \RR$ and vice versa. The \emph{distortion} of~$\RR$ is defined as
\[
\dis(\RR) \de \sup \big\{ |\dd_\cX(x,x') - \dd_\cY(y,y')|\,:\, (x,y),(x',y')\in \RR \big\}.
\]
Then, by \cite[Theorem~7.3.25]{burago01cmg}, 
\begin{equation*}
\dGH\big((\cX,\dd_\cX),(\cY,\dd_\cY)\big) = \frac 12 \inf_{\RR} \dis(\RR),
\end{equation*}
where the infimum is taken over all correspondences between $\cX$ and $\cY$.

\begin{acks}[Acknowledgments]
We thank Gr\'egory Miermont for interesting discussions in the elaboration of this work and two anonymous referees for their very detailed comments, which contributed to improve the paper.
\end{acks}

\begin{funding}
The first author, J.B., acknowledges support from Grant ANR-20-CE48-0018 \emph{3DMaps}. The second and third authors, N.C.\ and L.F., acknowledge support from ERC 740943 \emph{GeoBrown}: in particular, this grant supported L.F.\ when he was working at Universit\'e Paris-Saclay, where part of this work was conducted. The fourth author, A.S., was supported by Grant ANID AFB170001, FONDECYT iniciaci\'on de investigaci\'on N${}^\circ$ 11200085 and ERC 101043450 Vortex.
\end{funding}




\end{document}